\documentclass[oneside, english, reqno]{amsart}
\usepackage[T1]{fontenc}
\usepackage[latin9]{inputenc}
\usepackage{amsthm}
\usepackage{amssymb}
\usepackage{esint}
\usepackage{amsmath}
\usepackage{color}
\usepackage{xcolor}

\makeatletter
\numberwithin{equation}{section}
\numberwithin{figure}{section}
\theoremstyle{plain}
\newtheorem{thm}{Theorem}[section]
\newtheorem{prop}[thm]{Proposition}
\newtheorem{definition}[thm]{Definition}

\newtheorem{lem}[thm]{Lemma}
\newtheorem{cor}[thm]{Corollary}
\newtheorem{rem}[thm]{Remark}

  \newcounter{casectr}
  
\theoremstyle{definition}
\newtheorem{assumption}{Assumption}
\theoremstyle{remark}

\makeatother

\usepackage{fancyhdr}
\pagestyle{fancy}
\fancyhead{}
\fancyhead[C]{Convergence of eigenvalues to the support of the limiting measure}
\fancyhead[R]{\thepage}
\fancyfoot{}
\usepackage{babel}
\newcommand{\AAA}{\boldsymbol{A}}

\newcommand{\eee}{\boldsymbol{f}}

\providecommand{\casename}{Case}
\newcommand*{\mathcolor}{}
\def\mathcolor#1#{\mathcoloraux{#1}}
\newcommand*{\mathcoloraux}[3]{%
  \protect\leavevmode
  \begingroup
    \color#1{#2}#3%
  \endgroup
}
\def\La{{\mathcal L}}

\begin{document}
\title{Convergence of eigenvalues to the support of the limiting measure
in  critical $\beta$ matrix models}
\author{C. Fan $^\ddagger$ }
\author{ A. Guionnet $^\dagger$}
\author{Y. Song $^\S$}
\author{A. Wang$^\sharp$}

\thanks{\quad \\$^{\ddagger} $Department of Mathematics,  Massachusetts Institute of Technology,  77 Massachusetts Ave,  Cambridge,  MA 02139-4307 USA. email:
cjfan@math.mit.edu.\\$^{\dagger}$ Department of Mathematics,  Massachusetts Institute of Technology,  77 Massachusetts Ave,  Cambridge,  MA 02139-4307 USA. email:
guionnet@math.mit.edu.\\ $^{\S}$Department of Mathematics,  Massachusetts Institute of Technology,  77 Massachusetts Ave,  Cambridge,  MA 02139-4307 USA. email:
yuqisong@mit.edu.\\ $^{\sharp}$Department of Mathematics,  Massachusetts Institute of Technology,  77 Massachusetts Ave,  Cambridge,  MA 02139-4307 USA. email:
wandi@.mit.edu.}
\maketitle
\begin{center}

\par\end{center}

\begin{abstract}
We consider the convergence of the eigenvalues to the support of  the equilibrium measure in the $\beta$ matrix models at criticality. We show a phase transition phenomenon,  namely that, with probability one,
 all eigenvalues will fall in the support of the limiting spectral measure
 when $\beta>1$,  whereas this   fails when $\beta<1$.
\end{abstract}
\section{\textbf{introduction and statement of the result}}

\subsection{Definitions and Known Results}
Let $\boldsymbol{B}$ be a subset of the real line. $\boldsymbol{B}$ can be chosen as the whole real line,  an interval,  or the union of finitely many disjoint intervals. For now,  let  $V:\boldsymbol{B}\rightarrow\mathbb{R}$
be an arbitrary function,  and let $\beta>0$ be a positive real number. In
this paper,  we consider the $\beta$ ensemble, i.e a sequence of $N$ random variables $(\lambda_{1}, \ldots, \lambda_{N})$
with law $\mu_{N, \beta}^{V;\boldsymbol{B}}$  defined as the probability measure on $\boldsymbol{B}^N$ given by 
\begin{equation}\label{defi1}
d\mu_{N, \beta}^{V;\boldsymbol{B}}(\lambda)=\frac{1}{Z_{N, \beta}^{V;\boldsymbol{B}}}\prod_{i=1}^{N}d\lambda_{i}e^{-\frac{N\beta}{2}V(\lambda_{i})}\mathbf{1}_{\boldsymbol{B}}(\lambda_{i})\prod_{1\leq i<j\leq N}\left|\lambda_{i}-\lambda_{j}\right|^{\beta}, 
\end{equation}
 where $Z_{N, \beta}^{V;\boldsymbol{B}}$ is  the partition function 
\begin{equation}\label{defi2}
Z_{N, \beta}^{V;\boldsymbol{B}}=\int_{\mathbb{R}}\cdots\int_{\mathbb{R}}\prod_{i=1}^{N}d\lambda_{i}e^{-\frac{N\beta}{2}V(\lambda_{i})}\mathbf{1}_{\boldsymbol{B}}(\lambda_{i})\prod_{1\leq i<j\leq N}\left|\lambda_{i}-\lambda_{j}\right|^{\beta}.
\end{equation}

If $\beta$ is equal to 1, 2, or 4,  $\mu_{N, \beta}^{V;\mathbb{R}}$
is the probability measure induced on the eigenvalues of $\Omega$
by the probability measure $d\Omega e^{-\frac{N\beta}{2}\mathrm{Tr}(V(\Omega))}$
on a vector space of real symmetric,  Hermitian,  and self-dual quaternionic
$N\times N$ matrices respectively,   see \cite{meh}.\\
 Therefore,  the $\beta$ models  can be viewed as the natural generalization of these matrix models and we will refer to
$\lambda_{i}$  as ``eigenvalue'' of a "matrix
model". For a quadratic potential,  the $\beta$ ensembles can also be realized as the eigenvalues of tridiagonal matrices  \cite{DE02}. Even though 
such a construction is not known for general potentials,  $\beta$ matrix models are natural Coulomb interaction probability measures which appear in many different settings.
These laws have   been intensively studied,  both in physics and in mathematics. In particular,  the convergence of the empirical measure  of the $\lambda_i$'s (which we will call hereafter the spectral measure)
 was proved
 \cite{SaffTotik, Defcours,  AGZ},  and its fluctuations analyzed \cite{Johansson98,  Pasturs,  Shch}. Moreover,  the partition functions as well as the mean Stieltjes transforms can be  expanded 
 as a function of the dimension  to all orders \cite{bp78, ACKMe, ACM92, Ake96, CE06, Ch06, BG1, BG2}.  It turns out that both central limit theorems and all order expansions depend heavily 
 on whether the limiting spectral measure has a connected support. Indeed,  when the limiting spectral measure has a disconnected support,  it turns out that even though most 
 eigenvalues will stick into one of its connected components,  some eigenvalues will randomly switch from one to the other connected components of the support even at the large dimension limit.  This phenomenon can invalidate the central limit theorem,
 see e.g.  \cite{Pasturs, Shch}, and results with the presence of a Theta function in the large dimension expansion of the partition function \cite{BG2}. 
 In the case where the limiting measure has a connected support $\boldsymbol{S}$,  and that the eigenvalues are assumed to belong asymptotically to $\boldsymbol{S}$,  even more refined
 information could be derived. Indeed,  in this case,   local fluctuations  of the $\lambda_i$'s
 could first be established in the case corresponding to Gaussian random matrices,  $\beta=1, 2$ or $4$  and $V(x)=x^2$ \cite{meh},  then
 to tridiagonal ensembles (all $\beta\ge 0$ but $V(x)=x^2$) \cite{RRV06} and more recently for general potentials and $\beta\ge 0$ \cite{BEY1, BEY2,  BEY3, Shch, BFG}. However,  all these articles consider non-critical potentials. 
 We shall below define more precisely the later case but let us say already that a non-critical potential  prevents the eigenvalues to deviate from the support of the limiting spectral measure as
 the dimension goes to infinity.
 We study in this article  $\beta$ models with critical  potentials and whether  the eigenvalues stay confined in the limiting support.
 In fact,  we exhibit an interesting phase transition: we show that if $\beta>1$ the eigenvalues stay confined whereas if $\beta<1$ some  deviate towards the critical point with  probability one.
 We postpone the study of the critical case $\beta=1$ to further research. Let us finally point out that the case where the potential is critical,  but with critical parameters tuned with the dimension so that 
 new phenomena occur,  was studied in \cite{tb, eynardbirth}. We restrict ourselves to potentials independent of the dimension. 
 
 We next describe more precisely  the definition of criticality 
 and our results.

Consider the \textit{spectral  measure} $L_{N}:=\frac{1}{N}\sum_{i=1}^{N}\delta_{\lambda_{i}}$, 
where $\delta_{\lambda_{i}}$ is the Dirac measure centered on $\lambda_{i}$. $L_N$ belongs to the set $M_1(\boldsymbol{B})$ of probability measures on the real line.
We endow this space with the weak topology. 
Then,  $L_N$ converges almost surely. This convergence can be derived from the  following large deviation result (see \cite{BAG},  and \cite[Theorem 2.6.1]{AGZ})
: \begin{thm} \label{large-deviation} Assume that $V$ is continuous and goes to infinity faster than $2\log|x|$ (if $\boldsymbol{B}$ is not bounded). The law of $L_{N}$ under $\mu_{N,\beta}^{V; \boldsymbol{B}}$ satisfies a large deviation
principle with speed $N^{2}$ and good rate function $\tilde{\mathcal{E}}$, 
where $ \tilde{\mathcal{E}}={\mathcal {E}}-\inf\{  {\mathcal {E}}(\mu), \mu\in M_1(\boldsymbol{B})\}$ with
\begin{equation}
{\mathcal{E}}[\mu]=\frac{\beta}{4} \iint \left( V(\xi)+V(\eta) -2\log\left|\xi-\eta\right|\right)d\mu(\xi)d\mu(\eta)\,.
\end{equation}
In other words,  
\begin{enumerate}
\item $$\lim_{N\rightarrow\infty}\frac{1}{N^2}\log Z_{N,\beta}^{V;\boldsymbol{B}}=-\inf_{\mu\in M_1(\boldsymbol{B})}  \mathcal E[\mu]\,.$$
\item $\tilde{\mathcal{E}}:M_{1}(\mathbb{R})\rightarrow[0, \infty]$ possesses compact
level sets $\{v:\tilde{\mathcal{E}}(v)\leq M\}$ for all $M\in\mathbb{R}^{+}$.
\item for any open set $O\subset M_{1}(\boldsymbol{B})$, 
\[
\liminf_{N\rightarrow\infty}\frac{1}{N^{2}}\log\mu_{N, \beta}^{V;\boldsymbol{B}}\left(L_{N}\in O\right)\geq-\inf_{O}\tilde{\mathcal{E}}.
\]

\item for any closed set $F\subset M_{1}(\boldsymbol{B})$, 
\[
\liminf_{N\rightarrow\infty}\frac{1}{N^{2}}\log\mu_{N, \beta}^{V;\boldsymbol{B}}\left(L_{N}\in F\right)\leq-\inf_{F}\tilde{\mathcal{E}}.
\]

\end{enumerate}

\end{thm}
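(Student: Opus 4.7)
The natural strategy is a Laplace/$\Gamma$-convergence analysis of the density on the scale $N^{2}$. The starting point is the rewriting
\[
d\mu_{N,\beta}^{V;\boldsymbol{B}}(\lambda)=\frac{1}{Z_{N,\beta}^{V;\boldsymbol{B}}}\exp\bigl(-N^{2}I_{N}(L_{N})\bigr)\prod_{i=1}^{N}\mathbf{1}_{\boldsymbol{B}}(\lambda_{i})\,d\lambda_{i},\qquad I_{N}(\mu):=\frac{\beta}{2}\int V\,d\mu-\frac{\beta}{2}\iint_{x\neq y}\log|x-y|\,d\mu(x)d\mu(y).
\]
Note that $I_{N}(L_{N})$ coincides with $\mathcal{E}(L_{N})$ up to the removed diagonal, which is precisely where the logarithmic singularity sits; reconciling $I_{N}$ with $\mathcal{E}$ is the main technical issue.

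\textbf{Good rate function.} I would first establish that $\mathcal{E}$ is lower semi-continuous with compact level sets. Writing $-\log|x-y|=\sup_{M}\bigl[(-\log|x-y|)\wedge M\bigr]$ exhibits $\mathcal{E}$ as a supremum of continuous bounded functionals $\mathcal{E}_{M}$ in the weak topology, hence lsc. Tightness of level sets follows from the assumption that $V(x)\geq (2+\varepsilon)\log|x|$ at infinity: a Chebyshev-type estimate gives $\mathcal{E}(\mu)\geq c\int \bigl(\tfrac{1}{2}V(x)-\log(1+|x|)\bigr)d\mu-C$, forcing concentration on compacts. In particular $\mathcal{E}$ admits a minimizer and $\inf\mathcal{E}$ is finite, so the normalization $\tilde{\mathcal{E}}=\mathcal{E}-\inf\mathcal{E}$ makes sense.

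\textbf{Upper and lower bounds.} For the upper bound, given a closed $F\subset M_{1}(\boldsymbol{B})$, I would use the deterministic bound $I_{N}(L_{N})\geq \mathcal{E}_{M}(L_{N})-\beta M/(2N)$, obtained by replacing the truncated integrand on the diagonal (the discrepancy is the $N$ points of mass $1/N^{2}$ each contributing at most $M$). Since $\mu\mapsto\mathcal{E}_{M}(\mu)$ is continuous, a standard finite covering argument on $F$ yields $\limsup_{N}N^{-2}\log\mu_{N,\beta}^{V;\boldsymbol{B}}(L_{N}\in F)\leq -\inf_{F}\mathcal{E}_{M}-\liminf_{N}N^{-2}\log Z_{N,\beta}^{V;\boldsymbol{B}}$, and I would then let $M\to\infty$ invoking monotone convergence $\mathcal{E}_{M}\nearrow\mathcal{E}$. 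For the lower bound, for an open $O$ and $\mu\in O$ with $\mathcal{E}(\mu)<\infty$, I would approximate $\mu$ by the mollification $\mu_{\epsilon}=\mu*\rho_{\epsilon}$, assume without loss of generality that $\mu_\epsilon$ has compactly supported continuous density, and place deterministic quantile points $\lambda_{i}^{\ast}$ defined by $\int_{-\infty}^{\lambda_{i}^{\ast}}d\mu_{\epsilon}=i/(N+1)$. On the product set $\Delta_{N}=\prod_{i}[\lambda_{i}^{\ast},\lambda_{i}^{\ast}+\delta/N]$, the ordered configuration satisfies $L_{N}\in O$ for $\delta,\epsilon$ small and the interaction term is bounded above by $\mathcal{E}(\mu_{\epsilon})+o(1)$ (the spacing $\gtrsim 1/N$ controls the log terms). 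The volume $(\delta/N)^{N}$ contributes only at scale $N\log N\ll N^{2}$, yielding $\liminf_{N}N^{-2}\log\mu_{N,\beta}^{V;\boldsymbol{B}}(L_{N}\in O)\geq -\mathcal{E}(\mu_{\epsilon})-\liminf_{N}N^{-2}\log Z_{N,\beta}^{V;\boldsymbol{B}}$ and then $\epsilon\to 0$.

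\textbf{Partition function and obstacle.} Applying the upper and lower bounds to $F=O=M_{1}(\boldsymbol{B})$ shows $N^{-2}\log Z_{N,\beta}^{V;\boldsymbol{B}}\to -\inf\mathcal{E}$, giving item (1); substituting this asymptotic back into the bounds above converts them into the statements (3)-(4) with rate function $\tilde{\mathcal{E}}$, while item (2) is part of the good-rate-function discussion. The main obstacle in both bounds is, as expected, the logarithmic singularity: in the upper bound it demands the cutoff $M$ together with a careful accounting of the diagonal so that the error vanishes as $N\to\infty$ and $M\to\infty$; in the lower bound it obliges one to spread the approximating configuration $(\lambda_{i}^{\ast})$ at the microscopic scale $1/N$ to prevent pairs from coalescing and driving $I_{N}$ to $+\infty$.
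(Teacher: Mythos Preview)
The paper does not give its own proof of this theorem: it is stated as a known result with citations to \cite{BAG} and \cite[Theorem 2.6.1]{AGZ}. Your sketch is precisely the classical argument carried out in those references---truncation of the logarithm to get lower semicontinuity and the large deviation upper bound, quantile-point configurations for the lower bound, and deduction of the partition function asymptotics by applying the bounds to the full space---so there is nothing to compare beyond noting that your outline matches the standard proof the authors are citing.
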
  The 
minimizers of $\mathcal{E}$
are described as follows  (see \cite[ Lemma 2.6.2]{AGZ}): 
\begin{thm}\label{t2} 
$\mathcal{E}$
achieves  its minimal value at a unique minimizer  $\mu_{\mathrm{eq}}$. Moreover,  $\mu_{\rm eq}$
has a compact support $\boldsymbol{S}$. In addition,  there exists a constant $C_V$ such
that: 
\begin{equation}\label{theeq}
\begin{cases}
\mathrm{for}\; x\in \boldsymbol{S} & 2\int_{\mathbb{R}}d\mu_{\mathrm{eq}}(\xi)\ln\left|x-\xi\right|-V(x)=C_V\\
\mathrm{for}\; x\;\mathrm{ Lebesgue\; almost\; everywhere\; in\;} \boldsymbol{S}^c & 2\int_{\mathbb{R}}d\mu_{\mathrm{eq}}(\xi)\ln\left|x-\xi\right|-V(x)<C_V.
\end{cases}
\end{equation}

\end{thm}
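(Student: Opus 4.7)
The plan is to establish the four assertions of Theorem \ref{t2} in the standard order: existence, uniqueness, Euler--Lagrange characterization, and then compactness of the support.

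For existence, I would first check that $\mathcal{E}$ is lower semicontinuous on $M_1(\boldsymbol{B})$ with respect to the weak topology. The standard trick is to truncate the logarithmic kernel, writing $-\log|x-\eta| = \lim_{M\to\infty} \min(-\log|x-\eta|, M)$ as an increasing limit of bounded continuous functions, which yields lower semicontinuity of the double integral; combined with the lower semicontinuity of $\int V \, d\mu$ (using that $V$ is continuous and bounded below under the growth hypothesis), this gives lower semicontinuity of $\mathcal{E}$. The compactness of the level sets $\{\mathcal{E}\le M\}$, already asserted in Theorem \ref{large-deviation}(2), follows because the hypothesis $V(x)\ge (2+\varepsilon)\log|x|$ for $|x|$ large forces tightness of any sequence of measures with bounded energy. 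Existence of a minimizer then follows by taking a minimizing sequence, extracting a weakly convergent subsequence, and using lower semicontinuity.

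For uniqueness, the key input is the strict convexity of the logarithmic energy $\Sigma[\mu]:=-\iint\log|\xi-\eta|\,d\mu(\xi)d\mu(\eta)$ on signed measures of total mass zero and finite energy. The cleanest way is the Fourier representation
\begin{equation*}
-\log|x-y| = \int_0^\infty \frac{1-\cos(t(x-y))}{t}\,dt + \text{const},
\end{equation*}
from which, for any signed measure $\nu$ with $\nu(\mathbb{R})=0$ and finite logarithmic energy, $\Sigma[\nu] = \int_0^\infty |\widehat{\nu}(t)|^2 \, t^{-1}\,dt \ge 0$, with equality iff $\nu=0$. Applying this to $\nu = \mu_1-\mu_2$ for two minimizers produces a contradiction unless they agree; hence uniqueness.

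For the variational characterization, I would perturb $\mu_{\mathrm{eq}}$ along $\mu_t = (1-t)\mu_{\mathrm{eq}} + t\nu$ for $\nu\in M_1(\boldsymbol{B})$ with finite energy and differentiate at $t=0^+$. Non-negativity of the derivative yields
\begin{equation*}
\int \bigl( V(x) - 2\int\log|x-\xi|\,d\mu_{\mathrm{eq}}(\xi) \bigr) d(\nu-\mu_{\mathrm{eq}})(x) \ge 0,
\end{equation*}
and by a standard testing argument (taking $\nu$ to be Dirac-like at points of $\boldsymbol{S}$, or using that the above inequality must hold for all admissible $\nu$) the effective potential $U(x):=V(x)-2\int\log|x-\xi|d\mu_{\mathrm{eq}}(\xi)$ equals a constant $-C_V$ on $\boldsymbol{S}$ and is $\ge -C_V$ quasi-everywhere outside. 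To upgrade the outer inequality to strict on Lebesgue-a.e.\ $\boldsymbol{S}^c$, I would invoke the fact that $U$ is real-analytic (in fact subharmonic) off $\boldsymbol{S}$ and cannot be constant on a set of positive Lebesgue measure without being constant on a whole connected component, which would then extend to $\boldsymbol{S}$ and force a contradiction with the growth of $V$. Finally, compactness of $\boldsymbol{S}$ is automatic: since $U(x)\to +\infty$ as $|x|\to\infty$ (using $V(x)/\log|x|\to\infty$), the equality $U=-C_V$ can hold only on a bounded set.

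The main obstacle I expect is the strict inequality on $\boldsymbol{S}^c$ and the precise analytic argument that it holds Lebesgue-almost everywhere rather than only quasi-everywhere (i.e.\ up to logarithmic capacity zero); this is where one needs potential-theoretic regularity of $U$ rather than just the variational inequality.
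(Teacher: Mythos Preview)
The paper does not prove Theorem~\ref{t2}; it is quoted as a known result from \cite[Lemma~2.6.2]{AGZ}. Your outline---lower semicontinuity and tightness for existence, positive-definiteness of the logarithmic kernel on zero-mass signed measures for uniqueness, the first-variation argument for the Euler--Lagrange conditions, and growth of the effective potential for compactness of the support---is precisely the standard route found in that reference (and in \cite{SaffTotik}), so at the level of strategy there is nothing to compare.

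The one genuine gap is the step you yourself flag: upgrading the variational inequality on $\boldsymbol{S}^c$ from non-strict to \emph{strict} Lebesgue-a.e. You propose to use that $U(x)=V(x)-2\int\log|x-\xi|\,d\mu_{\mathrm{eq}}(\xi)$ is real-analytic off $\boldsymbol{S}$, but only the logarithmic-potential part is analytic there; $V$ is merely continuous at this point in the paper (analyticity is imposed only later, in Assumption~\ref{main-assume}). The fallback ``subharmonic'' does not rescue the argument either, since such a function can attain its supremum on a set of positive measure. In fact the strict inequality can genuinely fail for continuous $V$: start from any non-critical pair $(V_0,\mu_{\mathrm{eq}})$, pick an interval $I$ disjoint from $\boldsymbol{S}$, and redefine $V$ on $I$ to equal $2\int\log|\cdot-\xi|\,d\mu_{\mathrm{eq}}(\xi)-C_V$, interpolating continuously outside. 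The non-strict Euler--Lagrange conditions still hold, so $\mu_{\mathrm{eq}}$ remains the unique minimizer with unchanged support, yet equality now holds on all of $I\subset\boldsymbol{S}^c$. What the variational argument actually delivers is $\le C_V$ Lebesgue-a.e.\ on $\boldsymbol{S}^c$; the strict version requires further regularity of $V$ and is essentially the ``non-critical'' hypothesis the paper isolates immediately afterward.
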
 
We will refer to $\mu_{\rm eq}$, which is compactly supported,  as the equilibrium measure.\par
\begin{rem}\label{weak convergence}  Theorem  \ref{large-deviation} and Theorem  \ref{t2} imply that  under $\mu_{N, \beta}^{V;\boldsymbol{B}}$, $L_{N}$ converges  to the equilibrium measure $\mu_{\rm eq}$
almost surely.
\end{rem}\par
Once the existence of the equilibrium measure is established,  one may explore the convergence of the eigenvalues to the support of the equilibrium measure $\mu_{eq}$. It is shown in  \cite{BG1,BG2} that the probability that eigenvalues escape this limiting support is governed by a large deviation principle with rate function given by 
\begin{equation}
\mathcal{\tilde{\mathcal{J}}}^{V;\boldsymbol{B}}(x)=\mathcal{J}^{V;\boldsymbol{B}}(x)+C_V
\end{equation}
with
\begin{equation}\label{jjj}
\mathcal{J}^{V;\boldsymbol{B}}(x)=\begin{cases}
V(x)-2\int d\mu_{\mathrm{eq}}(\xi)\ln\left|x-\xi\right| & x\in\boldsymbol{B}\backslash \boldsymbol{S}\\
-C_{V} & \mathrm{otherwise}.
\end{cases}
\end{equation}
The large deviation principle states as follows:
\begin{thm} \label{original-thm} Assume $V$ continuous and going to infinity faster than $2\log|x|$ (in the case where $\boldsymbol{B}$ is not bounded). Then
\begin{enumerate}
\item $\tilde{\mathcal{J}}^{V;\boldsymbol{B}}$ 
is a  good rate function. 
\item 
We have large deviation estimates: for any $\mathsf{F} \subseteq \overline{\boldsymbol{B}\backslash \boldsymbol{S}}$ closed and $\mathsf{O} \subseteq \boldsymbol{B}\backslash \boldsymbol{S}$ open,
\begin{eqnarray*}
\limsup_{N\rightarrow\infty}\frac{1}{N}\ln \mu^{V;\mathsf{B}}_{N,\beta}\left[\exists i\quad\lambda_i \in \mathsf{F}\right] & \le & -\frac{\beta}{2}\,\inf_{x \in \mathsf{F}} \tilde{ \mathcal{J}}^{V;\boldsymbol{B}}(x), \\
\liminf_{N\rightarrow\infty}\frac{1}{N}\ln \mu^{V;\mathsf{B}}_{N,\beta}\left[\exists i\quad\lambda_i \in \mathsf{O}\right] & \ge & -\frac{\beta}{2}\,\inf_{x \in \mathsf{O}} \tilde {\mathcal{J}}^{V;\boldsymbol{B}}(x).
\end{eqnarray*}
\end{enumerate}

\end{thm}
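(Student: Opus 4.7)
By exchangeability and the union bound,
\[
\mu_{N,\beta}^{V;\boldsymbol{B}}\bigl[\exists i,\ \lambda_i \in \mathsf{F}\bigr] \le N\,\mu_{N,\beta}^{V;\boldsymbol{B}}[\lambda_1 \in \mathsf{F}],
\]
and the factor $N$ is harmless at the scale $\tfrac{1}{N}\log$. Hence the upper bound reduces to estimating the marginal law of a single eigenvalue. Dually, for the lower bound it will suffice to bound $\mu_{N,\beta}^{V;\boldsymbol{B}}[\lambda_1 \in B_\delta(x^\ast)]$ from below, where $x^\ast \in \mathsf{O}$ is chosen close to the infimum of $\tilde{\mathcal{J}}^{V;\boldsymbol{B}}$.

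I would analyse this marginal through the explicit conditional density of $\lambda_1$ given $(\lambda_2,\dots,\lambda_N)$,
\[
p(\lambda_1 \mid \lambda_2,\dots,\lambda_N) \propto e^{-\frac{N\beta}{2}V(\lambda_1)}\mathbf{1}_{\boldsymbol{B}}(\lambda_1)\, \exp\!\Bigl(\beta(N-1)\!\int \ln|\lambda_1-\xi|\,dL'_{N-1}(\xi)\Bigr),
\]
with $L'_{N-1}$ the empirical measure of $(\lambda_2,\dots,\lambda_N)$. The conditional law of $(\lambda_2,\dots,\lambda_N)$ given $\lambda_1 = x$ is a $\beta$-ensemble with slightly perturbed potential $\tfrac{N}{N-1}\bigl(V - \tfrac{2}{N}\ln|x-\cdot|\bigr)$, so its equilibrium measure differs from $\mu_{\mathrm{eq}}$ by $O(1/N)$ and, by Theorem~\ref{large-deviation} applied to it, $L'_{N-1}$ concentrates at $\mu_{\mathrm{eq}}$ exponentially fast at speed $N^2$. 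Substituting this in the exponent and invoking the Euler--Lagrange identity \eqref{theeq} both in the numerator (where $\lambda_1 \in \boldsymbol{B}\setminus\boldsymbol{S}$, so $V(\lambda_1) - 2\int\ln|\lambda_1-\xi|\,d\mu_{\mathrm{eq}}(\xi) = \mathcal{J}^{V;\boldsymbol{B}}(\lambda_1)$) and in the normalising denominator integral (dominated by $y \in \boldsymbol{S}$ where $V(y) - 2\int\ln|y-\xi|\,d\mu_{\mathrm{eq}}(\xi) \equiv -C_V$) yields, to leading exponential order,
\[
p(\lambda_1 \mid \cdot) \,\asymp\, e^{-\frac{N\beta}{2}\,\tilde{\mathcal{J}}^{V;\boldsymbol{B}}(\lambda_1)}.
\]
Integration over $\mathsf{F}$ gives the upper bound. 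For the lower bound I restrict to $\lambda_1 \in B_\delta(x^\ast)$ and to the typical event $\{L'_{N-1}\approx\mu_{\mathrm{eq}}\}$, apply the same asymptotic, and finally let $\delta \to 0$ using the continuity of $\tilde{\mathcal{J}}^{V;\boldsymbol{B}}$ on $\boldsymbol{B}\setminus\boldsymbol{S}$.

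The main obstacle is that the functional $\mu \mapsto \int \ln|x-\xi|\,d\mu(\xi)$ is not continuous in the weak topology at the singularity $\xi = x$, so weak concentration of $L'_{N-1}$ at $\mu_{\mathrm{eq}}$ does not by itself yield pointwise concentration of the log-potential. For $x \in \mathsf{F} \subset \boldsymbol{B}\setminus\boldsymbol{S}$ the singularity is quenched as soon as no $\lambda_j$ approaches $x$, so one needs a quantitative estimate on the number of $\lambda_j$ in a small neighbourhood of $x$, together with a tail bound forcing all $\lambda_j$ to stay in a bounded region (which uses the assumption that $V$ grows faster than $2\ln|x|$). These ingredients are classical in the theory of $\beta$-ensembles and can be imported from \cite{BG1,BG2}. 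Note that since the whole analysis takes place on $\boldsymbol{B}\setminus\boldsymbol{S}$, the criticality of the potential plays no role in this particular theorem; that is why it is stated for arbitrary continuous confining $V$.
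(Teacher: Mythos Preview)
The paper does not supply its own proof of Theorem~\ref{original-thm}; the result is quoted from \cite{BG1,BG2} (a close variant also appears in \cite[Section~2.6]{AGZ}). Your plan is essentially the standard argument used in those references: reduce to the one-particle marginal via exchangeability and the union bound, write that marginal through the log-potential of the empirical measure of the remaining $N-1$ particles, and invoke the speed-$N^{2}$ large deviation principle of Theorem~\ref{large-deviation} to replace $L'_{N-1}$ by $\mu_{\mathrm{eq}}$, after which the Euler--Lagrange relations \eqref{theeq} produce $\tilde{\mathcal{J}}^{V;\boldsymbol{B}}$. You have also correctly isolated the only genuine technical obstacle --- the lack of weak continuity of $\mu\mapsto\int\ln|x-\xi|\,d\mu(\xi)$ --- and your proposed remedy (a priori confinement from the growth of $V$, plus a crude bound on the number of particles in a shrinking neighbourhood of $x$) is exactly what is done in the cited proofs. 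One small refinement: for the upper bound over an unbounded closed set $\mathsf{F}$ one also needs the growth hypothesis on $V$ to ensure $\tilde{\mathcal{J}}^{V;\boldsymbol{B}}$ has compact level sets (part (1) of the statement), so that the infimum over $\mathsf{F}$ is attained and the Laplace-type estimate closes; you allude to this but it deserves explicit mention.
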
 
The last theorem shows that the support of the spectrum is governed by the minimizers of $\tilde{\mathcal{J}}^{V;\boldsymbol{B}}$ . 
\begin{definition}
Assume $V$ is continuous. We say that $V$ is {\it non-critical}
iff {$\tilde{\mathcal{J}}^{V;\boldsymbol{B}}$} is positive everywhere outside of the support of $\mu_{\rm eq}$. 
\end{definition}
\begin{rem}
Theorem \ref{t2} only ensures {$\tilde{\mathcal{J}}^{V;\boldsymbol{B}}$} is positive \textbf{almost} everywhere outside of the support of $\mu_{\rm eq}$.
\end{rem}
\begin{rem} In the literature, the potential $V$ is also said to be critical when the density of the equilibrium measure vanishes at an interior point or at an edge point at a fast rate.
We will assume that our potential is \it{not} critical in this sense (see the fourth point in Assumption \ref{main-assume}).
\end{rem}
A consequence of the second part of the aforementioned theorem is
the following: 
\begin{cor} \label{original-cor}
Let the assumptions in Theorem \ref{original-thm} hold. Assume that $V$ is non-critical. Then
\begin{equation}\label{re}
\lim_{N\rightarrow\infty}\mu_{N, \beta}^{V;\boldsymbol{B}}\left(\exists \lambda_{i}\notin \boldsymbol{A}\right)=0\,, 
\end{equation}
for any open set $\boldsymbol{A}$ containing the support of $\mu_{\rm eq}$.
\end{cor}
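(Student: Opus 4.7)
The plan is to deduce the corollary from the large-deviation upper bound in Theorem \ref{original-thm} applied to the closed set complementary to $\boldsymbol{A}$. The only substantive work is to upgrade the non-criticality hypothesis (pointwise positivity of $\tilde{\mathcal{J}}^{V;\boldsymbol{B}}$ off $\boldsymbol{S}$) to a uniform positive lower bound on a suitable closed set; once this is in hand, the probability will decay at speed $N$ and the limit \eqref{re} follows immediately.

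More concretely, first I would reformulate the event: since all $\lambda_i$ lie in $\boldsymbol{B}$ by definition of $\mu_{N,\beta}^{V;\boldsymbol{B}}$, the event $\{\exists\, \lambda_i\notin \boldsymbol{A}\}$ equals $\{\exists\, \lambda_i\in \boldsymbol{B}\setminus \boldsymbol{A}\}$, and this event is contained in $\{\exists\, \lambda_i\in \mathsf{F}\}$ for $\mathsf{F}:=\overline{\boldsymbol{B}\setminus \boldsymbol{A}}\cap\overline{\boldsymbol{B}\setminus \boldsymbol{S}}$. Because $\boldsymbol{A}$ is open and contains $\boldsymbol{S}$, the set $\boldsymbol{A}^c$ is closed and disjoint from $\boldsymbol{S}$, so $\mathsf{F}\subseteq \boldsymbol{A}^c\subseteq \boldsymbol{S}^c$. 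The set $\mathsf{F}$ is closed and, by construction, contained in $\overline{\boldsymbol{B}\setminus \boldsymbol{S}}$, so Theorem \ref{original-thm}(2) applies and gives
\begin{equation*}
\limsup_{N\to\infty}\frac{1}{N}\log \mu_{N,\beta}^{V;\boldsymbol{B}}\!\left(\exists\, \lambda_i\notin \boldsymbol{A}\right) \;\le\; -\frac{\beta}{2}\inf_{x\in \mathsf{F}}\tilde{\mathcal{J}}^{V;\boldsymbol{B}}(x).
\end{equation*}

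The heart of the argument is then to show $\inf_{\mathsf{F}}\tilde{\mathcal{J}}^{V;\boldsymbol{B}}>0$. This is where I expect the only real (but mild) obstacle. Non-criticality gives $\tilde{\mathcal{J}}^{V;\boldsymbol{B}}(x)>0$ for every $x\in \boldsymbol{B}\setminus \boldsymbol{S}$, but $\mathsf{F}$ may contain boundary points (limits of points in $\boldsymbol{B}\setminus \boldsymbol{S}$) where a priori the value could drop to $0$. To rule this out I would use that $\tilde{\mathcal{J}}^{V;\boldsymbol{B}}$ is a good rate function (Theorem \ref{original-thm}(1)), hence lower semi-continuous with compact sub-level sets. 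If the infimum over $\mathsf{F}$ were $0$, then for every $\varepsilon>0$ the compact set $\mathsf{F}\cap\{\tilde{\mathcal{J}}^{V;\boldsymbol{B}}\le \varepsilon\}$ would be non-empty; taking $\varepsilon\downarrow 0$ and using finite intersection of a nested family of compacts, we would obtain $x^{\ast}\in \mathsf{F}$ with $\tilde{\mathcal{J}}^{V;\boldsymbol{B}}(x^{\ast})=0$. By the explicit formula \eqref{jjj} together with non-criticality, the zero set of $\tilde{\mathcal{J}}^{V;\boldsymbol{B}}$ inside $\overline{\boldsymbol{B}}$ is contained in $\boldsymbol{S}$ (using continuity of $V$ and of the logarithmic potential $\xi\mapsto \int\log|x-\xi|\,d\mu_{\mathrm{eq}}(\xi)$ at points where $\mu_{\mathrm{eq}}$ has no mass). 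But $x^{\ast}\in \mathsf{F}\subseteq \boldsymbol{A}^c$ while $\boldsymbol{S}\subseteq \boldsymbol{A}$, a contradiction.

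With $c:=\tfrac{\beta}{2}\inf_{\mathsf{F}}\tilde{\mathcal{J}}^{V;\boldsymbol{B}}>0$ established, the upper bound above yields $\mu_{N,\beta}^{V;\boldsymbol{B}}(\exists\, \lambda_i\notin \boldsymbol{A})\le e^{-cN/2}$ for all $N$ large enough, which in particular tends to $0$ as $N\to\infty$, proving \eqref{re}. The main technical point to watch is the compactness argument needed to convert pointwise positivity into uniform positivity on $\mathsf{F}$; boundedness of $\mathsf{F}$ (when $\boldsymbol{B}$ is unbounded) is taken care of by the growth of $V$ at infinity, which ensures the goodness of the rate function and hence that level sets are compact even though $\mathsf{F}$ itself may fail to be bounded.
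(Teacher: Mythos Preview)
Your proposal is correct and follows exactly the approach the paper intends: the paper does not spell out a proof of this corollary but simply records it as an immediate consequence of the large-deviation upper bound in Theorem~\ref{original-thm} (see also Remark~\ref{rem3}, which states the exponential decay $\mu_{N,\beta}^{V;\boldsymbol{B}}(\exists\,\lambda_i\notin\boldsymbol{A})\le e^{-cN}$). Your write-up correctly supplies the one detail the paper leaves implicit, namely the compactness argument---via goodness of the rate function---that turns pointwise positivity of $\tilde{\mathcal J}^{V;\boldsymbol{B}}$ on $\boldsymbol{B}\setminus\boldsymbol{S}$ into a strictly positive infimum over the closed set $\mathsf{F}=\overline{\boldsymbol{B}\setminus\boldsymbol{A}}$.
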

\begin{rem}\label{rem3} 
 Since the law of the eigenvalues  satisfies a large deviation principle with rate N,  the eigenvalues actually converge to the support exponentially fast (or more precisely, $\exists c >0, $ s.t, $\mu_{N, \beta}^{V;\boldsymbol{B}}\left(\exists \lambda_{i}\notin \boldsymbol{A}\right)\leq e^{-c N}$ for any open set $\boldsymbol{A}$ containing the support of $\mu_{\rm eq}$ ).
 \end{rem}\par 
 \begin{rem}\label{imprem}
By the definition of the partition function, $1-\mu_{N, \beta}^{V;\boldsymbol{B}}\left(\exists \lambda_{i}\notin \boldsymbol{A}\right)=\frac{Z_{N, \beta}^{V;\boldsymbol{A}}}{Z_{N, \beta}^{V;\boldsymbol{B}}}$, thus, 
\begin{equation} 
(\ref{re})\Leftrightarrow \lim_{N\rightarrow \infty}\frac{Z_{N, \beta}^{V;\boldsymbol{A}}}{Z_{N, \beta}^{V;\boldsymbol{B}}}=1.
\end{equation}
\end{rem}
In the rest of this article we investigate what happens in the case where $V$ is critical. 
This investigation will require the uses of precise estimates on $\beta$ models partitions functions derived in \cite{BG1,BG2}
and to apply these results we shall make the following assumption :
\begin{assumption} \label{main-assume} 
\begin{itemize}
\item $V:\boldsymbol{B}\rightarrow\mathbb{R}$ is a continuous function independent
of $N$. 
\item If $\pm\infty\in\boldsymbol{B}$,  
\begin{equation}
\liminf_{x\rightarrow\pm\infty}\frac{V(x)}{2\ln\left|x\right|}>1.
\end{equation}

\item $\mathrm{supp}\left(\mu_{\mathrm{eq}}\right)$ is a finite union of disjoint intervals,  i.e. %
$\mathrm{supp}\left(\mu_{\mathrm{eq}}\right)$ of the form $\boldsymbol{S}=\bigcup_{h=1}^{g}\boldsymbol{S}_{h}$, 
where $\boldsymbol{S}_{h}=[\alpha_{h}^{-}, \alpha_{h}^{+}]$.
\item Let $\boldsymbol{B}=\cup_h [b_h^-,b_h^+]$ with $b_h^-\le\alpha_h^-\le\alpha_h^+\le b_h^+$ and set $ \mathrm{Hard}=\{a\in\cup\{\alpha_h^-,\alpha_h^+\}:\alpha_h^\pm=b_h^\pm\}$ and
$\mathrm{Soft}=\cup\{\alpha_h^-,\alpha_h^+\}\backslash  \mathrm{Hard}$. 
Then we assume that 
\[
S(x)=\pi\frac{d\mu_{\mathrm{eq}}}{dx}\sqrt{\left|\frac{\prod_{a\in\mathrm{Hard}}(x-a)}{\prod_{a\in\mathrm{Soft}}(x-a)}\right|}.
\]
is strictly positive  whenever $x\in\boldsymbol{S}$.
 
\item V is a real analytic  function in some open neighborhood  $\AAA$ of $\boldsymbol{S}$ : $\AAA= \cup_{h=1}^g \boldsymbol{A}_h$, $\boldsymbol{A}_h=(a_h^-,a_h^+)$
 for some $a_h^-<\alpha_h^-<\alpha_h^+<a_h^+$, and $A_{h},A_{h'}$ are disjoint for any $h\neq h'$.

\end{itemize}
\end{assumption}
\begin{rem} When $V$ is analytic in a neighborhood of the real line, the third point of our assumption is automatically satisfied. Here, we assume analyticity only in a neighborhood of $\boldsymbol{S}$. 
\end{rem}
\begin{rem} Hereafter the neighborhood $\AAA$ will be fixed, but clearly can be chosen as small as wished, being given it is open and containing $\boldsymbol{S}$.
\end{rem}

We want to investigate whether (\ref{re}) still holds when the restriction on $\tilde{\mathcal{J}}^{V;\boldsymbol{B}}$ is  weakened so that it vanishes outside the support $\boldsymbol{S}$. Our working hypothesis
will be the following:
 \begin{assumption}\label{as}
 With $\AAA$ given in
 Assumption \ref{main-assume}, assume that 
$\tilde {\mathcal{J}}^{V;\boldsymbol{B}}$ vanishes only on the support of the equilibrium measure $\boldsymbol{S}$ and at
 one point $c_{0}$ in $\overline{\AAA}^c$. We also require  that $V$  (and therefore $ \tilde {\mathcal{J}}^{V;\boldsymbol{B}}$) extends as a twice continuously differentiable 
 function in a open neighborhood $(c_0-\varepsilon, c_0+\varepsilon)$ of $c_0$, for some $\varepsilon>0$ and that 
  $\frac{d^{2}}{dx^{2}} \tilde {\mathcal{J}}^{V;\boldsymbol{B}}(c_{0})>0$.
  Moreover, for technical reason,  we require $\frac{d^{2}}{dx^{2}} V\geq\sigma>0$ on $\boldsymbol{A}$. \end{assumption} 
\begin{rem}\label{remsup}
 Because  $\tilde{\mathcal{J}}^{V; \boldsymbol{B}}$ is a good rate function which is positive 
outside $\boldsymbol{A}\cup (c_{0}-\epsilon, c_{0}+\epsilon)$ for  any $\epsilon>0$ small enough  under our assumptions, Theorem \ref{original-thm} 
implies  that   $\mu_{N, \beta}^{V;\boldsymbol{B}}\left(\exists \lambda_{i}\notin \boldsymbol{A}\cup(c_{0}-\epsilon,c_{0}+\epsilon)\right) $ goes to zero exponentially fast. In other words,  it is bounded above  by $e^{-Nc_{\epsilon}}$ for some $c_{\epsilon}>0$.
\end{rem}{
Potentials $V$ satisfying {Assumptions  \ref{main-assume} and \ref{as}} are easy to build. Indeed, being given a probability measure $\mu$ so that $x\rightarrow f(x)=2\int\log|x-y|d\mu(y)$ is well defined  and continuous on the
whole real line,  we simply choose $V$ to be equal to $f$ on the support  $S$ of $\mu$ and Lebesgue almost surely strictly greater than $f$ outside $\boldsymbol{S}$.  This insures that $\mu$ is the
equilibrium measure of our $\beta$-model with potential $V$ as it satisfies \eqref{theeq}.  We can then choose  $V-f$   strictly positive outside $\boldsymbol{S}$ except at the point $c_0$ where it is strictly convex. To make sure that $V$ also satisfies Assumption \ref{main-assume}, we can take $\mu$ to be the equilibrium measure for a potential $W$ which is real-analytic,  going to infinity faster than $2\ln |x|$,
 strictly convex in a neighborhood of $\boldsymbol{S}$, and non-critical  (in the one cut case, we can take $W$ strictly
convex everywhere).  We let $\AAA=\cup_{h=1}^g (a_h^-,a_h^+)$ be a   bounded, open neighborhood 
 of the support of $\mu$.  We choose $V=W+C_{W}$ on $ (-\infty,a_g^+)$. We may assume without loss of generality  that $\tilde{\mathcal J}^{V;\boldsymbol{B}} (a_g^+)$ is strictly positive.  We then choose for $x>a_g^+$, $V(x)-f(x)=d(x-c_0)^2$ for some $d>0$ and $c_0>a_g^+$ so that $d(a_g^+-c_0)^2=\tilde{\mathcal J}^{W;\boldsymbol{B}}(a_g^+)$. 
 We have constructed an equilibrium measure $\mu$ for a potential $V$ satisfying Assumptions  \ref{main-assume} and \ref{as}.  }

\subsection{Main Results}
\begin{thm}\label{main}
Given Assumptions \ref{main-assume} and  \ref{as},   and  with $\AAA$ as in assumption \ref{main-assume},  $c_0,\varepsilon$ as in Assumption \ref{as}, we have the following alternative :
\begin{itemize}
\item when $\beta>1$, 
\begin{equation}
\lim_{N\rightarrow\infty}\mu_{N, \beta}^{V;\boldsymbol{B}}\left(\exists \lambda_{i}\notin \boldsymbol{A}\right)=0,  \end{equation}
\item when $\beta<1$, 
\begin{equation}
\lim_{N\rightarrow\infty}\mu_{N, \beta}^{V;\boldsymbol{B}}\left(\exists \lambda_{i}\notin \boldsymbol{A}\right)=1.
\end{equation} 
\end{itemize}
{Equivalently, for  any $\epsilon\in (0,\varepsilon)$,
the probability that there exists an eigenvalue in $(c_0-\epsilon,c_0+\epsilon)$ goes to zero when $\beta>1$  and to one when $\beta<1$. }
\end{thm}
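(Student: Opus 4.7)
The strategy is to reduce to the probability of finding an eigenvalue near $c_0$, express this probability as a ratio of partition functions, and then extract its precise asymptotic via the Borot--Guionnet expansion. By Remark \ref{remsup}, the probability that an eigenvalue lies outside $\boldsymbol{A}\cup I$, where $I=(c_0-\epsilon,c_0+\epsilon)$, is exponentially small, so up to an error of order $e^{-cN}$ one has
\begin{equation*}
\mu_{N,\beta}^{V;\boldsymbol{B}}(\exists \lambda_i \notin \boldsymbol{A})
=\mu_{N,\beta}^{V;\boldsymbol{B}}(\exists \lambda_i \in I)
=1-\frac{Z_{N,\beta}^{V;\boldsymbol{A}}}{Z_{N,\beta}^{V;\boldsymbol{A}\cup I}}+O(e^{-cN}).
\end{equation*}
Everything then reduces to estimating the ratio $Z_{N,\beta}^{V;\boldsymbol{A}\cup I}/Z_{N,\beta}^{V;\boldsymbol{A}}$.

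I would decompose the numerator according to the number $k$ of eigenvalues that fall into $I$:
\begin{equation*}
\frac{Z_{N,\beta}^{V;\boldsymbol{A}\cup I}}{Z_{N,\beta}^{V;\boldsymbol{A}}}=1+\sum_{k\geq 1}R_k,\qquad R_k=\binom{N}{k}\int_{I^k}\prod_{j=1}^k d\mu_j\,e^{-\frac{N\beta}{2}V(\mu_j)}\prod_{j<j'}|\mu_j-\mu_{j'}|^\beta\,\frac{Z_{N-k,\beta}^{V_{\boldsymbol{\mu}};\boldsymbol{A}}}{Z_{N,\beta}^{V;\boldsymbol{A}}},
\end{equation*}
where $V_{\boldsymbol{\mu}}(y)=V(y)-\frac{2}{N}\sum_j\log|y-\mu_j|$. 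The change of variable $\mu_j=c_0+u_j/\sqrt{N}$, justified by $\tilde{\mathcal{J}}^{V;\boldsymbol{B}}(c_0)=0$ and $\frac{d^2}{dx^2}\tilde{\mathcal{J}}^{V;\boldsymbol{B}}(c_0)>0$, shows that the mutual Vandermonde $\prod_{j<j'}|\mu_j-\mu_{j'}|^\beta$ contributes a factor $N^{-\beta k(k-1)/4}$ and each $u_j$ integral contributes $N^{-1/2}$, yielding bounds of the form $R_k=O(R_1^k N^{-\beta k(k-1)/4}/k!)$. Thus $\sum_{k\geq 1}R_k$ is controlled by $R_1$: if $R_1\to 0$ then $\sum R_k=R_1(1+o(1))$, while if $R_1\to \infty$ then $\sum R_k\to\infty$ and the target probability tends to $1$.

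The crucial step is therefore the asymptotic order of $R_1$. Since $\boldsymbol{S}\subset\boldsymbol{A}$ and $V$ is non-critical on $\boldsymbol{A}$, the Borot--Guionnet expansion \cite{BG1,BG2} applies to both $Z_{N,\beta}^{V;\boldsymbol{A}}$ and to $Z_{N-1,\beta}^{V_\mu;\boldsymbol{A}}$, the latter being an $O(1/N)$ analytic perturbation of the former for $\mu\in I$. Combining the leading exponential of the ratio with $e^{-\frac{N\beta}{2}V(\mu)}$ via the equilibrium identity $2\int\log|\mu-\xi|\,d\mu_{\mathrm{eq}}(\xi)-V(\mu)=C_V-\tilde{\mathcal{J}}^{V;\boldsymbol{B}}(\mu)$ produces the density $e^{-\frac{N\beta}{2}\tilde{\mathcal{J}}^{V;\boldsymbol{B}}(\mu)}$. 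Laplace integration around $c_0$ contributes $N^{-1/2}$, and combined with the prefactor $N$ one obtains $N^{1/2}$. The additional $N^{-\beta/2}$ factor that drives the phase transition arises from the $\beta$-dependent subleading terms of the Borot--Guionnet expansion---the same mechanism that produces the Barnes $G$-function corrections in the Selberg asymptotics of Mehta integrals---so that altogether $R_1\asymp N^{(1-\beta)/2}$.

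The dichotomy follows at once: for $\beta>1$, $R_1\to 0$ and hence $\mu_{N,\beta}^{V;\boldsymbol{B}}(\exists\lambda_i\notin\boldsymbol{A})\to 0$; for $\beta<1$, $R_1\to\infty$ and the same probability tends to $1$. The main technical obstacle is the precise identification of the $N^{-\beta/2}$ factor, which requires the full $1/N$-expansion of the non-critical partition function together with careful tracking of its $\beta$-dependent constants; a subsidiary difficulty is to verify the uniform validity of these asymptotics under the log-perturbation $V_\mu$ as $\mu$ varies in $I$, which should follow from the smooth dependence of the Borot--Guionnet coefficients on the potential.
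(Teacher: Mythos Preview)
Your approach is essentially the paper's: decompose by the number $k$ of eigenvalues falling in $I$, show the single-eigenvalue term scales like $N^{(1-\beta)/2}$ via the Borot--Guionnet expansion, and control the higher-order terms to conclude. The paper's Proposition~\ref{mainprop} is precisely your ``main technical obstacle'': the $N^{-\beta/2}$ factor is read off directly from the prefactor $N^{(\beta/2)N}$ in the partition-function expansion (Proposition~\ref{order0}), and the log-perturbation of $V$ is handled not by smooth dependence of the BG coefficients but by a concentration inequality (Lemma~\ref{central}) for the linear statistic $\sum_i\log|\mu-\lambda_i|$. One small omission in your outline is that the uniform control on $R_k$ is only established for $k\le\delta N$, so the regime $k>\delta N$ must be disposed of separately via the large deviation principle for $L_N$ (this is the term $P_2$ in \eqref{finalstep}); also, the paper bounds the Vandermonde among the $\mu_j$ trivially by $(2\epsilon)^{\beta k(k-1)/2}$ and absorbs the resulting $e^{Ck^2}$ constants by choosing $\epsilon$ small, rather than performing the joint Laplace rescaling you sketch.
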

The behavior below $\beta=1$ can be illustrated with the case $\beta=0$ where one would consider a potential $V$ vanishing on a support $\boldsymbol{S}$ and at a point $c_0$ (where its second derivative is positive), being strictly positive everywhere else. This corresponds to $N$ independent variables with probability of order $N^{-1/2}$ to belong to a small neighborhood of $c_0$ (where the latter probability can be estimated by Laplace method). However, we will
show in this article that
 this weight has to be corrected by a term $N^{-\frac{\beta}{2}}$ by studying the precise estimates derived in \cite{BG1,BG2}.  Essentially, digging back into the latter estimates, one can realize that these corrections 
come from Selberg integral and in fact is due to the Coulomb gas interaction. In this case, it is clear 
that some eigenvalues will lie in the neighborhood of $c_0$ with positive probability as soon as $\beta+1<2$. The existence of a phase transition for this phenomenon 
at $\beta=1$ is new  to our knowledge.  It suggests that the support of the eigenvalues of matrices with real coefficients corresponding to $\beta=1$ matrix models
might be  more sensible to perturbations of the potential than matrices with complex coefficients (corresponding to $\beta=2$). This is however not supported by finite dimensional perturbations of the Wigner matrices since the BBP transition \cite{BBP} does not vary much between these two cases. 
Let us observe that our arguments could be carried similarly with several critical points similar to $c_0$ without changing the phase transition. However, if
the second derivative of $ {\mathcal{J}^{V;\boldsymbol{B}}}$ at these critical points could vanish so that $ {\mathcal{J}}^{V;\boldsymbol{B}}$ behaves as $|x-c_0|^q$ in the vicinity of $c_0$ for some $q> 2$,
the phase transition would occur at a threshold $\beta_q$ depending on $q$ (see  Remark \ref{beha}). 
\subsection{Structure of the paper}
In Section 2 we reduce the problem to the analysis of  the probability that  M eigenvalues are contained in a small neighborhood of $c_{0}$ while the rest of the $N-M$  eigenvalues are contained in $\boldsymbol{A}$ and state the main proposition, Proposition \ref{mainprop}, which give precise estimates of this probability.
We deduce our main result Theorem \ref{main} in the case $\beta>1$ in   Section \ref{betamore}  and the case $\beta<1$ in  Section \ref{betaless}. 
Section \ref{proofprop1} is devoted to the proof of Proposition \ref{mainprop},  which we first give in the case where the equilibrium measure has a connected support and then extend to the general case. The appendix contains precise concentration of measures results which are key to our estimates.
\subsection{Notation}
We use the  notation %
$X\lesssim Y$ (resp. %
$X\gtrsim Y$) to denote $X\leq CY $ (resp. %
$X\geq CY$) for some universal constant $C$ . 
$X\approx Y$ when both 
$X\lesssim Y$ and $X\gtrsim Y$ hold. We sometimes
use $a\ll 1$ to denote that $a$ is smaller than any universal constant involved in the proof.
\section{\textbf{Preliminary and Basic analysis}}
The probability that a specific subset of $M$ eigenvalues are contained in a small neighborhood $(c_0-\epsilon,c_0+\epsilon)$ of $c_0$ while the other $N-M$  eigenvalues are contained in $\boldsymbol{A}$ shall be denoted by  $\Phi_{N, M, \beta}^{V;{\epsilon}}$ :
 \begin{equation}\label{main3}
\Phi_{N, M, \beta}^{V;{\epsilon}}:=\mu_{N, \beta}^{V;\boldsymbol{B}}\left(\lambda_{N-M+1}, ..., \lambda_{N}\in (c_{0}-\epsilon, c_{0}+\epsilon), \lambda_{1}, ...\lambda_{N-M}\in\boldsymbol{A}\right).
\end{equation} {{
$\Phi_{N, M, \beta}^{V;{\epsilon}}$ depends also  on $\AAA$ and $\boldsymbol{B}$   but it  will be fixed hereafter as in Assumption \ref{main-assume} so that we do not stress this dependency. $\epsilon$ will later be chosen small enough, but notice  that the conclusion will not depend on this choice since the probability that eigenvalues go in $[c_0-\varepsilon,c_0+\varepsilon]\backslash [c_0-\epsilon,c_0+\epsilon]$ goes to zero as $N$ goes to infinity for any $\epsilon>0$ by the large deviation principle Theorem \ref{original-thm}  and Assumption \ref{as}.}}
The key to prove our main result is to compute
 the speed at which $\Phi_{N, M, \beta}^{V;{\epsilon}}$  goes to zero  as $N$ goes to +$\infty$. Indeed, 
note that
\begin{eqnarray}
\quad\mu_{N, \beta}^{V;\boldsymbol{B}}\left(\exists \lambda_{i} \notin \boldsymbol{A}\right)
&=& \mu_{N, \beta}^{V;\boldsymbol{B}}\left(\exists \lambda_{i} \notin \boldsymbol{A}\cup (c_{0}-\epsilon, c_{0}+\epsilon)\right) \nonumber\\
&+&\sum_{M>\delta N} \left(\begin{array}{c}
N\\
M
\end{array}\right){\Phi}^{V;{\epsilon}}_{N, M, \beta}\nonumber\\
&+&\sum_{1\le M \leq \delta N}\left(\begin{array}{c}
N\\
M
\end{array}\right){\Phi}^{V;{\epsilon}}_{N, M, \beta}\nonumber\\
&=:&P_{1}+P_{2}+P_{3}.\label{finalstep}
\end{eqnarray}
Here $\delta>0$ is a small fixed constant {which will be chosen later}.\par
Since $\tilde{\mathcal{J}}^{V; \boldsymbol{B}}$ is a good rate function which is positive 
outside $\boldsymbol{A}\cup [c_{0}-\epsilon, c_{0}+\epsilon]$, Theorem \ref{original-thm} 
implies  that for any fixed $\epsilon>0$, $P_{1}$ approaches 0 exponentially fast. In other words,  it is controlled by $e^{-Nc_{\epsilon}}$ for some $c_{\epsilon}>0$.\par
$P_2$ is bounded above by the probability that there are at least $\delta N$ eigenvalues close to $c_0$. This implies that the empirical measure $L_{N}$ must put a mass $\delta$ in $(c_0-\epsilon,c_0+\epsilon)$, so that it must be at a positive 
distance of $\mu_{\rm eq}$ since  $(c_0-\epsilon,c_0+\epsilon)\cap\boldsymbol{S}=\emptyset$. 
By the large deviation principle for the law of $L_{N}$ described in Theorem \ref{large-deviation}, $P_{2}$ is bounded above  by $e^{-{c_{\delta}N^{2}}}$ for some $c_{\delta}>0$.  Therefore we deduce that
for any $\delta,\varepsilon >0$, there exists $c(\delta,\epsilon)>0$ such that 
\begin{equation}\label{finalstep2}
\mu_{N, \beta}^{V;\boldsymbol{B}}\left(\exists \lambda_{i} \notin \boldsymbol{A}\right)= P_{3} +O(e^{-c(\delta,\epsilon)N}).
\end{equation}
Our goal,  therefore,  is to control the third term $P_{3}$.\par
 Since  $\mathcal{J}^{V;\boldsymbol{B}}$ goes to infinity at infinity, Theorem \ref{original-thm} also shows that the probability to have an eigenvalue above some finite threshold  goes to zero
 exponentially fast. Therefore, we may assume without loss of generality that $\boldsymbol{B}$ is a bounded set.\par
Thus,  we are left to analyze ${\Phi}^{V;{\epsilon}}_{N, M, \beta}$ for a bounded set $\boldsymbol{B}$.\par
We prove the following  bounds in section \ref{proofprop}:
\begin{prop}\label{mainprop}
\label{multi-eig-thm} Let Assumptions \ref{main-assume} and \ref{as} hold. Then, there exist $c, \delta_0,\epsilon_0>0$ so that  for $\delta\in (0,\delta_0)$,  $\epsilon\in (0,\epsilon_0\wedge \varepsilon)$, 
 we have uniformly in $M\le \delta N$
 \begin{equation}\label{ub}
\Phi_{N, M, \beta}^{V;{\epsilon}}\lesssim \frac{1}{N^{\frac{M(\beta+1)}{2}}} +O(e^{-c N^2}).
\end{equation}
On the other hand, 
\begin{equation}\label{lb}
\frac{1}{N^{\frac{(\beta+1)}{2}}}\frac{Z_{N,\beta}^{V; \boldsymbol{ A}}}{Z_{N,\beta}^{V;\boldsymbol{B}}}\lesssim  \Phi_{N, 1, \beta}^{V;{\epsilon}}+{O(e^{-c N^{2}})}.
\end{equation}

\end{prop}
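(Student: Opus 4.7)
The plan is to write $\Phi_{N,M,\beta}^{V;\epsilon}$ as a ratio of partition functions and analyze it by combining a Laplace-type expansion near $c_0$ with the precise partition function asymptotics of \cite{BG1,BG2}. I first express
\[
Z_{N,\beta}^{V;\boldsymbol{B}}\cdot \Phi_{N,M,\beta}^{V;\epsilon}=\int_{(c_0-\epsilon,c_0+\epsilon)^M\times\boldsymbol{A}^{N-M}}\prod_i e^{-\frac{N\beta V(\lambda_i)}{2}}\prod_{i<j}|\lambda_i-\lambda_j|^\beta\,\prod d\lambda_i,
\]
parametrize the critical eigenvalues as $\lambda_{N-M+k}=c_0+s_k$, and separate pure bulk, pure critical and cross terms. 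The cross terms $\prod_{i\le N-M,\,k\le M}|\lambda_i-c_0-s_k|^\beta$ are absorbed into an effective potential $V_{\mathrm{eff}}^{\vec s}(y)=V(y)-\frac{2}{N}\sum_k\log|y-c_0-s_k|$ on the bulk, so that after rescaling the bulk integral becomes a $\beta$-matrix model partition function $Z_{N-M,\beta}^{\hat V^{\vec s};\boldsymbol{A}}$, controlled uniformly in $\vec s$ via the concentration of the bulk empirical measure around $\mu_{\mathrm{eq}}$ proved in the appendix.

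Using Assumption \ref{as} (namely $\tilde{\mathcal{J}}^{V;\boldsymbol{B}}(c_0)=0$, $(\tilde{\mathcal{J}}^{V;\boldsymbol{B}})'(c_0)=0$, and $(\tilde{\mathcal{J}}^{V;\boldsymbol{B}})''(c_0)>0$), Taylor expansion of $V$ at $c_0$ combined with the bulk concentration converts each single-particle exponent into a Gaussian of width $1/\sqrt N$,
\[
-\frac{N\beta V(c_0+s_k)}{2}+\beta(N-M)U(c_0+s_k)=\mathrm{const}-\frac{N\beta (\tilde{\mathcal{J}}^{V;\boldsymbol{B}})''(c_0)}{4}s_k^2+\ldots,
\]
with $U(x)=\int\log|x-\xi|\,d\mu_{\mathrm{eq}}(\xi)$. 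I then rescale $s_k=t_k/\sqrt N$, which produces a Jacobian $N^{-M/2}$ and turns the critical-critical Vandermonde $\prod_{k<l}|s_k-s_l|^\beta$ into $N^{-\beta M(M-1)/4}\prod_{k<l}|t_k-t_l|^\beta$; the remaining $t$-integral is an $N$-independent Selberg--Mehta integral.

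The crucial extra factor $N^{-\beta M/2}$ that, combined with the Laplace $N^{-M/2}$ and critical Vandermonde $N^{-\beta M(M-1)/4}$, yields the claimed $N^{-M(\beta+1)/2}$ scaling will arise from comparing $Z_{N-M,\beta}^{\hat V^{\vec s};\boldsymbol{A}}$ with $Z_{N,\beta}^{V;\boldsymbol{A}}$ through the subleading terms in the asymptotic expansion of \cite{BG1,BG2}; this encodes the Selberg-type Coulomb suppression advertised in the introduction, the exponential $N^2$- and $N$-scale prefactors cancelling against the Laplace constants and against the ratio $Z_{N,\beta}^{V;\boldsymbol{A}}/Z_{N,\beta}^{V;\boldsymbol{B}}$. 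The upper bound \eqref{ub} then follows with error uniform in $M\le\delta N$. For the lower bound \eqref{lb} (the case $M=1$), I restrict to a ``good event'' of bulk concentration, whose complement has probability $O(e^{-cN^2})$ by the appendix, and on which the Laplace and partition function expansions become two-sided, producing a matching lower bound. The one-cut case is handled first; the multi-cut case follows by distributing eigenvalues among the connected components of $\boldsymbol{S}$, the local analysis near $c_0$ being insensitive to the cut structure. The main technical obstacle is the precise extraction of the $N^{-\beta M/2}$ correction from the \cite{BG1,BG2} subleading asymptotics uniformly in the log-singular perturbation $\hat V^{\vec s}-V$, and in particular ensuring that the error terms in those expansions are controlled uniformly on the relevant range of $\vec s$.
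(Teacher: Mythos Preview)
Your overall architecture is correct and matches the paper's: split into bulk and critical particles, extract $N^{-M/2}$ from the Laplace width near $c_0$, extract $N^{-\beta M/2}$ from the ratio of $(N-M)$-particle to $N$-particle partition functions via the $N^{\frac{\beta}{2}N+e}$ prefactor in the \cite{BG1,BG2} expansion, and control the cross-interaction by concentration of the bulk empirical measure around $\mu_{\rm eq}$. For the lower bound you also correctly identify that $M=1$ becomes an equality and that one only needs a one-sided concentration (the paper uses Jensen rather than a good-event restriction, but either works).

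There are two organizational differences worth noting. First, the paper does \emph{not} rescale the critical Vandermonde into a Selberg--Mehta integral. It simply bounds $\prod_{k<l}|\eta_k-\eta_l|^\beta\le(2\epsilon)^{\beta M(M-1)/2}$ and then uses this small-$\epsilon$ factor to absorb all the $e^{CM^2}$ errors coming from the partition function expansions and concentration constants. Your rescaling produces an extra $N^{-\beta M(M-1)/4}$ times $S_M=\int_{\mathbb{R}^M}\prod|t_k-t_l|^\beta e^{-c\sum t_k^2}\,dt$; note that your stated power count is off, since these three factors multiply to $N^{-M(\beta+1)/2-\beta M(M-1)/4}$, not $N^{-M(\beta+1)/2}$. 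For the upper bound this overshoot is harmless, but you then have to show $S_M\cdot e^{CM^2}\cdot N^{-\beta M(M-1)/4}$ is bounded uniformly in $M\le\delta N$, and since $\log S_M\sim \frac{\beta}{4}M^2\log M$ this requires a careful balance. The paper's crude bound avoids this bookkeeping entirely.

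Second, the paper does not pass to an effective potential $\hat V^{\vec s}$ and then invoke \cite{BG1,BG2} for that log-perturbed, $N$-dependent potential. Instead it keeps the unperturbed potential $\frac{N}{N-M}V$ on the bulk, writes the cross-interaction as an expectation $\Xi(\eta_1,\dots,\eta_M)=\mu_{N-M,\beta}^{\frac{N}{N-M}V;\boldsymbol{A}}\big[e^{\sum_i h_\eta(\lambda_i)}\big]$, and controls this directly by a concentration inequality (Lemma~\ref{central}) together with the one-term correlator estimate $W_1(x)=NW_1^{\{-1\}}(x)+O(1)$. This sidesteps exactly the obstacle you flag at the end: you never need uniformity of the full \cite{BG1,BG2} expansion in the log-singular perturbation, only a Lipschitz bound on $h_\eta$ and the unperturbed correlator bound. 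In the multi-cut case the paper sums over filling fractions and uses the sub-Gaussian tail of the filling-fraction weights to handle the extra $e^{CM(N-M)|\boldsymbol{f}-\boldsymbol{f}_\star|}$ term; your remark that the local analysis near $c_0$ is insensitive to the cut structure is correct, and this is where the $O(e^{-cN^2})$ error originates.
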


{If one assumes  $\boldsymbol{A}$ is connected, Proposition \ref{mainprop} follows from the calculation from \cite{BG1} where the one-cut case in considered.  In the general multi-cut case,
the proof of Proposition \ref{mainprop} is based on the precise estimate derived in  \cite{BG2} for the partition function and correlators for fixed filling fraction measure, that is with given number of eigenvalues in each connected part of the support $\boldsymbol{S}$.  One could also consider the more general multi-cut  case where the potential $V$ is replaced by a non-linear statistic and then use similar estimates derived in \cite{BGK}.}

 We next give the proof of our main result.

\section{Convergence  of the eigenvalues to the support $\boldsymbol{S}$ when $\beta>1$}\label{betamore}
We next prove  the first half of our main Theorem \ref{main}. To this end, we use the upper bound \eqref{ub} provided by  Proposition \ref{mainprop} to find
\begin{equation}
P_{3}\leq \sum_{1\le M\le \delta N}\left(\begin{array}{c}
N\\
M
\end{array}\right)\Phi_{N,M, \beta}^{V;{\epsilon}} \lesssim \left( 1+N^{-\frac{(\beta+1)}{2}}\right)^N-1\,.
\end{equation}
where we used  that any error of order $e^{-cN^2}$ in $\Phi_{N, M, \beta}^{V;{\epsilon}}, 1\le M\le \delta N,$ is neglectable in the above sum. Hence,  when $\beta>1$, $P_{3}$ goes to $0$ as $N$ goes to $+\infty$.  We deduce by  \eqref{finalstep2} that 
$$\lim_{N\to\infty} \mu_{N, \beta}^{V;\boldsymbol{B}}\left(\exists \lambda_{i} \notin \boldsymbol{A}\right)=0\,.$$
Moreover,  this is equivalent to  the fact that $ \mu_{N, \beta}^{V;\boldsymbol{B}}\left(\exists \lambda_{i} \in (c_0-\epsilon, c_0+\epsilon)\right)$
goes to zero by Remark \ref{remsup}. 
 
\section{ Escaping eigenvalues when $\beta<1$}\label{betaless}
We prove that when $\beta<1$ the probability that no eigenvalues
lies in the neighborhood of $c_0$ goes to zero, that is  by Remark \ref{imprem}, that we have:
\begin{equation}\label{eod}
\lim_{N\rightarrow \infty}\frac{Z_{N, \beta}^{V;\boldsymbol{A}}}{Z_{N, \beta}^{V; \boldsymbol{B}}}=0.
\end{equation}
This is done by lower bounding the probability $p_{N,\beta}^{V}$
 that one eigenvalue exactly lies in the neighborhood of
$c_0$. Indeed, as $A_i=\{\lambda_i\in (c_0-\epsilon,c_0+\epsilon), \lambda_j\in \boldsymbol{A}, j\neq i\}$ are disjoint as soon as $(c_0-\epsilon,c_0+\epsilon)\cap \boldsymbol{ A}=\emptyset$,
we have by symmetry
$$p_{N,\beta}^V=N\Phi_{N, 1, \beta}^{V;{\epsilon}}.$$
Since $p_{N,\beta}^V\le 1$,  we deduce  from \eqref{lb} that
$$\frac{N}{N^{\frac{(\beta+1)}{2}}}\frac{Z_{N,\beta}^{V; \boldsymbol{A}}}{Z_{N,\beta}^{V;\boldsymbol{B}}}
\lesssim 1+N\times {O(e^{-c N^{2}})}\,,$$ which results with
$$\frac{Z_{N,\beta}^{V; \boldsymbol{A}}}{Z_{N,\beta}^{V;\boldsymbol{B}}}\lesssim  N^{\frac{\beta-1}{2}}\,,$$
so that \eqref{eod}, and therefore the second part  of our main Theorem \ref{main},
follows.

We first prove  Proposition \ref{mainprop} in the case where $\mu_{\rm eq}$ has a connected support (the one cut case) where
our proof is based on the expansion of partition functions obtained in the one cut case in \cite{BG1}, 
and then turn to the more delicate general  case (multi-cut case) which is based from estimates from \cite{BG2}.

\section{{Proof of   Proposition \ref{mainprop} }}\label{proofprop1}
We start with the explicit formula for $\Phi_{N, M, \beta}^{V; {\epsilon}}$ which can be written as follows :
\begin{eqnarray}
\Phi_{N,M,\beta}^{V;{\epsilon}}
&=&\frac{Z_{N-M,\beta}^{\frac{N}{N-M} V; \boldsymbol{A}}}{Z_{N, \beta}^{V;\boldsymbol{B}}}
\int_{[c_0-\epsilon,c_0+\epsilon]^M}   \Xi(\eta_1,\ldots, \eta_M) \prod_{1\le k<l\le M} |{\eta_k-\eta_l}|^\beta
\prod_{j=1}^M e^{-\frac{\beta M}{2} V(\eta_j)} d\eta_j \nonumber
\end{eqnarray}
where

$${\Xi(\eta_1, \cdots, \eta_M):=\mu^{\frac{N}{N-M}V;\boldsymbol{A}}_{N-M, \beta}\left(\prod_{j=1}^{M}e^{\beta\sum_{i=1}^{N-M}\ln|\eta_{j}-\lambda_{i}|-\frac{\beta}{2}(N-M)V(\eta_{j})}\right)}$$
The term $\prod_{1\leq k<l\leq M}|\eta_{k}-\eta_{l}|^{\beta}$ is  bounded by $(2\epsilon)^{\frac{\beta M(M-1)}{2}}$.
Thus,  we have:
\begin{equation}\label{main4}
\Phi_{N, M, \beta}^{V;{\epsilon}}\leq (2\epsilon)^{\frac{\beta M(M-1)}{2}}
Y_{N, M} L_{N,M}\,.
\end{equation}
with \begin{eqnarray}\label{defLY} 
L_{N,M}&:=&\int_{[c_{0}-\epsilon, c_{0}+\epsilon]^{M}}\Xi(\eta_{1}, \cdots, \eta_{M})\prod_{j=1}^{M}e^{-\frac{M\beta}{2}V(\eta_{j})}d\eta_{j}.\nonumber\\
Y_{N, M}&:=&\frac{Z^{\frac{N}{N-M}V;\boldsymbol{A}}_{N-M, \beta}}{Z^{V;\boldsymbol{B}}_{N, \beta}},\nonumber\\
\end{eqnarray}
We wish to split $Y_{N, M}$ into components for further analysis. We make the decomposition:
\begin{equation}\label{main5}
Y_{N, M}=\frac{Z^{V,\boldsymbol{A}}_{N,\beta}}{Z^{V,\boldsymbol{B}}_{N,\beta}}
 \tilde{Y}_{N, M}\,, \qquad 
 \tilde{Y}_{N, M}
=F_{N,M}G_{N,M}\,,
\end{equation}
where 
 \begin{equation}\label{defFG}
F_{N, M}=\frac{Z_{N-M, \beta}^{\frac{N}{N-M}V;\boldsymbol{A}}}{Z_{N-M, \beta}^{V;\boldsymbol{A}}}\,,\qquad G_{N, M}=\frac{Z_{N-M, \beta}^{V;\boldsymbol{A}}}{Z_{N, \beta}^{V;\boldsymbol{A}}}\,.\end{equation}
{To get the upper bound in Proposition \ref{mainprop}, we will use the trivial estimate $Y_{N,M}\leq \tilde{Y}_{N,M}$ since $Z^{V,\boldsymbol{A}}_{N,\beta}\le Z^{V,\boldsymbol{B}}_{N,\beta}$.}
 Thus one finally rewrites formula (\ref{main4}) as:
\begin{equation}\label{clean}
\Phi_{N, M, \beta}^{V;{\epsilon}}\leq (2\epsilon)^{\frac{\beta M(M-1)}{2}}
G_{N, M}F_{N, M} L_{N,M}\,.
\end{equation}
We remark here that when M=1,  the inequality (\ref{main4}) becomes an equality:
\begin{equation}\label{counter}
\Phi_{N, 1, \beta}^{V; {\epsilon}}=\frac{Z^{V;\boldsymbol{A}}_{N,\beta}}{Z^{V;\boldsymbol{B}}_{N,\beta}}
G_{N, 1}F_{N,1} L_{N,1}\,.
\end{equation}
($\ref{counter}$) will be used to prove \eqref{lb}. \\

We next estimate $G_{N,M}$  and $F_{N,M}\times L_{N,M}$.
We shall prove that
\begin{prop}\label{order0} Under Assumptions \ref{main-assume} and \ref{as},
there exists  a small $\delta>0$, such that uniformly for  $M\le \delta N$, we have
\begin{equation} 
G_{N, M}\approx C_{M}\frac{1}{N^{\frac{M\beta}{2}}}e^{NM(\frac{\beta}{2}\inf_{\xi\in\boldsymbol{B}}\mathcal{J}^{V;\boldsymbol{B}}(\xi)+\frac{\beta}{2}\int V(\eta)d\mu_{\mathrm{eq}}(\eta))}(\frac{N-M}{N})^{\frac{\beta}{2}(N-M)} \,.
\end{equation}
Furthermore,  there exists a finite  positive constant $C$ such that for all $1\le M\le \delta N$,  
\begin{equation}
\frac{1}{C}e^{-CM^{2}}\le C_{M}\le C e^{CM^{2}}.
\end{equation}
\end{prop}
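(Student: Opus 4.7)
The proposition reduces, via the ratio structure $G_{N,M}=Z_{N-M,\beta}^{V;\boldsymbol{A}}/Z_{N,\beta}^{V;\boldsymbol{A}}$, to subtracting the asymptotic expansions of the partition function at $n=N$ and $n=N-M$. The plan is to invoke the all-orders expansion of $\log Z_{n,\beta}^{V;\boldsymbol{A}}$ established in \cite{BG1} in the one-cut case and in \cite{BG2} in the multi-cut case (via summation over integer filling fractions), which produces
\begin{equation*}
\log Z_{n,\beta}^{V;\boldsymbol{A}} \;=\; -\,n^{2}\,\mathcal{E}[\mu_{\rm eq}] \;+\; \tfrac{\beta}{2}\,n\log n \;+\; n\, F_{0} \;+\; F_{1} \;+\; O(n^{-1}),
\end{equation*}
with constants $F_{0},F_{1}$ independent of $n$ and a remainder that is uniform for $n\in[(1-\delta)N,N]$ once $\delta$ is sufficiently small.

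Applying this expansion to both factors of $G_{N,M}$, the constant $F_{1}$ cancels, and using the elementary identities $(N-M)^{2}-N^{2}=-2NM+M^{2}$ and $(N-M)\log(N-M)-N\log N=-M\log N+(N-M)\log\tfrac{N-M}{N}$, the difference can be rewritten as
\begin{equation*}
\log G_{N,M}\;=\;2NM\,\mathcal{E}[\mu_{\rm eq}]\;-\;M^{2}\,\mathcal{E}[\mu_{\rm eq}]\;-\;\tfrac{M\beta}{2}\log N\;+\;\tfrac{\beta(N-M)}{2}\log\tfrac{N-M}{N}\;-\;M\, F_{0}\;+\;O(N^{-1}).
\end{equation*}
Integrating the Euler--Lagrange identity \eqref{theeq} against $\mu_{\rm eq}$ yields $\iint\log|x-y|\,d\mu_{\rm eq}^{\otimes 2}=\tfrac{1}{2}\bigl(\int V\,d\mu_{\rm eq}+C_V\bigr)$, hence $\mathcal{E}[\mu_{\rm eq}]=\tfrac{\beta}{4}\bigl(\int V\,d\mu_{\rm eq}-C_V\bigr)$; since $\mathcal{J}^{V;\boldsymbol{B}}\equiv -C_V$ on $\boldsymbol{S}$ and $\mathcal{J}^{V;\boldsymbol{B}}>-C_V$ outside $\boldsymbol{S}$, we have $\inf\mathcal{J}^{V;\boldsymbol{B}}=-C_V$. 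Substituting, the linear-in-$NM$ term reorganizes as $NM\bigl(\tfrac{\beta}{2}\inf\mathcal{J}^{V;\boldsymbol{B}}+\tfrac{\beta}{2}\!\int V\,d\mu_{\rm eq}\bigr)$, the summand $-\tfrac{M\beta}{2}\log N$ exponentiates to the prefactor $N^{-M\beta/2}$, and the remaining explicit term exponentiates to $\bigl((N-M)/N\bigr)^{\beta(N-M)/2}$, matching the statement.

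Defining $C_{M}$ as the exponential of the leftover terms gives $\log C_{M}=-M^{2}\mathcal{E}[\mu_{\rm eq}]-M F_{0}+O(N^{-1})$, so $|\log C_{M}|\leq CM^{2}+O(1)$ for a constant $C=C(V,\beta)$, yielding the two-sided bound $\tfrac{1}{C}e^{-CM^{2}}\leq C_{M}\leq C\, e^{CM^{2}}$. The main obstacle is establishing the \emph{uniformity} of the \cite{BG1,BG2} expansion throughout $n\in[(1-\delta)N,N]$. In the multi-cut case, $Z_{n,\beta}^{V;\boldsymbol{A}}$ must first be decomposed as a sum over integer filling fractions $\vec{N}=(N_h)$ with $\sum_h N_h=n$; the dominant ones cluster around $n\,\mu_{\rm eq}(\boldsymbol{S}_h)$, and Assumption \ref{main-assume} (strict positivity of the density on $\boldsymbol{S}$ and real analyticity of $V$ on $\AAA$) ensures that the Hessian of the associated constrained energy is non-degenerate, so the saddle-point analysis of \cite{BG2} applies uniformly. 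Verifying that this uniformity persists as $M$ ranges over $[1,\delta N]$ for $\delta$ small (so that the optimal filling fractions shift smoothly between $N$ and $N-M$ particles) is where the bulk of the technical work lies; once this is in hand, the algebraic manipulations above give the result immediately.
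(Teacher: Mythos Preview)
Your proposal is correct and follows essentially the same route as the paper: apply the partition-function expansion from \cite{BG1} (one-cut) or \cite{BG2} (multi-cut, after summing the fixed-filling-fraction partition functions over $\vec N$ near $N\boldsymbol{f}_\star$ using the negative-definite Hessian of $F^{\{-2\}}$, which is exactly Lemma~\ref{lemG4} in the paper), then subtract the expansions at $n=N$ and $n=N-M$ and identify the coefficient $2\mathcal E[\mu_{\rm eq}]=\tfrac{\beta}{2}(\int V\,d\mu_{\rm eq}-C_V)$ via the Euler--Lagrange relation, just as the paper does when computing $A_\beta$. The only cosmetic slip is that the expansion carries an additional $e\log n$ term (from $N^{(\beta/2)N+e}$) and in the multi-cut case the sum over filling fractions yields only an $O(1)$ rather than $O(n^{-1})$ remainder, but both contribute $O(1)$ to $\log C_M$ and are harmlessly absorbed into the $e^{\pm CM^2}$ bound.
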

For  the term $F_{N,M}\times  L_{N,M}$ we have the estimate
\begin{prop}\label{o3} Under Assumptions \ref{main-assume} and  \ref{as},  there exists a positive constant $c$ and a finite constant $C$ 
so that   for $\delta>0$ small enough,  such that uniformly on $M\le \delta N$, 
$$F_{N,M} L_{N,M}\lesssim e^{CM^2} e^{-\frac{\beta}{2}NM(\int V(\eta)d\mu_{\rm eq}(\eta)+\inf \mathcal J^{V;\boldsymbol{B}})} \frac{1}{N^{\frac{M}{2}}} +O(e^{-c N^2})$$
and
$$F_{N,1} L_{N,1}\gtrsim e^{-\frac{\beta}{2}N(\int V(\eta)d\mu_{\rm eq}(\eta)+ \inf \mathcal J^{V;\boldsymbol{B}})} \frac{1}{N^{\frac{1}{2}}}+O(e^{-c N^2})\,.$$

\end{prop}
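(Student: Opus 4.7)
The plan is to estimate $F_{N,M}$ and $L_{N,M}$ separately, using concentration of the empirical measure under the relevant tilted $\beta$-ensembles, followed by Laplace's method near $c_0$ for the resulting integrals. The density of $\mu_{N-M,\beta}^{\frac{N}{N-M}V;\boldsymbol{A}}$ is proportional to $\prod_i e^{-\frac{N\beta}{2}V(\lambda_i)}\prod_{i<j}|\lambda_i-\lambda_j|^\beta$, so its equilibrium measure is still $\mu_{\rm eq}$. For each $\eta_j\in(c_0-\epsilon,c_0+\epsilon)$ the map $\xi\mapsto\ln|\eta_j-\xi|$ is smooth and bounded on $\boldsymbol{A}$ (since $c_0\in\overline{\AAA}^c$), so the concentration inequalities of the appendix approximate the linear statistic $\sum_i\ln|\eta_j-\lambda_i|$ by $(N-M)\int\ln|\eta_j-\xi|d\mu_{\rm eq}(\xi)$ with sub-Gaussian tails. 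Combined with the identity $2\int\ln|\eta-\xi|d\mu_{\rm eq}(\xi)-V(\eta)=C_V-\tilde{\mathcal J}^{V;\boldsymbol{B}}(\eta)$ on $\boldsymbol{B}\setminus\boldsymbol{S}$ coming from \eqref{jjj}, this yields
\begin{equation*}
\Xi(\eta_1,\ldots,\eta_M)\lesssim e^{CM^2}\prod_{j=1}^M e^{\frac{\beta(N-M)}{2}\left(C_V-\tilde{\mathcal J}^{V;\boldsymbol{B}}(\eta_j)\right)}+O(e^{-cN^2}),
\end{equation*}
with matching lower bound when $M=1$. The $e^{CM^2}$ prefactor accommodates the joint moment generating function of the $M$ centered linear statistics at scale $\beta$.

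Plugging into $L_{N,M}$, factoring out $e^{\frac{\beta(N-M)MC_V}{2}}$ and absorbing the bounded factor $e^{-\frac{\beta M}{2}V(\eta_j)}$ into $e^{CM^2}$, each remaining integral is $\int_{c_0-\epsilon}^{c_0+\epsilon}e^{-\frac{\beta(N-M)}{2}\tilde{\mathcal{J}}^{V;\boldsymbol{B}}(\eta)}d\eta$. Assumption \ref{as} gives $\tilde{\mathcal{J}}^{V;\boldsymbol{B}}(c_0)=0$ and $(\tilde{\mathcal{J}}^{V;\boldsymbol{B}})''(c_0)>0$, so Laplace's method yields two-sided bounds $\asymp(N-M)^{-1/2}\asymp N^{-1/2}$ uniformly in $M\leq\delta N$, hence $L_{N,M}\lesssim e^{CM^2}e^{\frac{\beta(N-M)MC_V}{2}}N^{-M/2}$ with equality of orders at $M=1$. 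For $F_{N,M}$ I would write $F_{N,M}=E_{\mu^{V;\boldsymbol{A}}_{N-M,\beta}}\!\left[e^{-\frac{\beta M}{2}\sum_i V(\lambda_i)}\right]$; the centered linear statistic $X_{N-M}:=\sum_i V(\lambda_i)-(N-M)\int V\,d\mu_{\rm eq}$ has exponential moments $E[e^{aX_{N-M}}]\leq e^{Ca^2+O(|a|)}$ by the appendix's concentration bounds at scale $a=\beta M/2$, yielding $F_{N,M}\lesssim e^{CM^2}e^{-\frac{\beta M(N-M)}{2}\int V\,d\mu_{\rm eq}}+O(e^{-cN^2})$ and a matching $M=1$ lower bound. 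Multiplying both estimates, using $(N-M)M=NM+O(M^2)$ and $\inf_{\boldsymbol{B}}\mathcal{J}^{V;\boldsymbol{B}}=-C_V$, delivers the announced bounds.

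The main obstacle is achieving the concentration with the right precision. A naive Lipschitz estimate on a linear statistic produces an $O(\log N)$ error which, once exponentiated with coefficient $\beta$ or $\beta M/2$, would swamp the $e^{CM^2}$ target with a polynomial factor $N^{O(\beta)}$. This forces the use of the appendix's sub-Gaussian concentration inequalities, which rely on the non-criticality of the potential and the lower bound $V''\geq\sigma>0$ on $\boldsymbol{A}$ (fourth bullet of Assumption \ref{main-assume} and Assumption \ref{as}). A secondary subtlety arises in the multi-cut case ($g>1$): eigenvalues can redistribute among the connected components of $\boldsymbol{S}$, so one must first freeze the filling fractions using the fine partition-function estimates of \cite{BG2}, apply the above analysis component-by-component, and sum over admissible fraction configurations, the sum being dominated by the most likely configuration thanks to the strict concavity of the filling-fraction rate function.
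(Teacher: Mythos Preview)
Your overall strategy---concentrate the linear statistics around the equilibrium integral, then apply Laplace's method on $(c_0-\epsilon,c_0+\epsilon)$, freezing filling fractions in the multi-cut case---is exactly the paper's. There is one genuine slip and one simplification you miss.

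The slip: the equilibrium measure of $\mu_{N-M,\beta}^{\frac{N}{N-M}V;\boldsymbol{A}}$ is \emph{not} $\mu_{\rm eq}$. The relevant energy functional has potential $\frac{N}{N-M}V$, so its minimiser differs from $\mu_{\rm eq}$ by $O(M/N)$. Since the statistic $\sum_i\ln|\eta_j-\lambda_i|$ is multiplied by $\beta$ and summed over $M$ values of $\eta_j$ inside an exponent of scale $N-M$, this mis-centering produces an $O(M^2)$ error in the exponent. That is absorbable into your $e^{CM^2}$ budget, but it is not free: you must invoke smooth dependence of the equilibrium measure on the potential, and you must also check that the appendix's concentration and correlator estimates hold \emph{uniformly} for the family of potentials $\frac{N}{N-M}V$, $M\le\delta N$.

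The paper sidesteps all of this by not separating $F_{N,M}$ from $\Xi$. The key identity is that the product is a single expectation under the \emph{untilted} measure:
\[
F_{N,M}\,\Xi(\eta_1,\ldots,\eta_M)=\mu^{V;\boldsymbol{A}}_{N-M,\beta}\Big(\exp\sum_{i=1}^{N-M}h_\eta(\lambda_i)\Big),\quad h_\eta(x)=\beta\sum_{j=1}^M\Big(\ln|\eta_j-x|-\tfrac12V(\eta_j)-\tfrac12V(x)\Big),
\]
because the partition function $Z^{\frac{N}{N-M}V;\boldsymbol{A}}_{N-M,\beta}$ in the numerator of $F_{N,M}$ cancels the normalisation inside $\Xi$, and the leftover $e^{-\frac{\beta M}{2}\sum_iV(\lambda_i)}$ merges with the $\ln|\eta_j-\lambda_i|$ terms into one test function. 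Now the concentration lemma applies directly with the genuine $\mu_{\rm eq}$ as centering, no tilted equilibrium measure appears, and the bounds of Proposition~\ref{o2} follow in one stroke; Proposition~\ref{o3} is then immediate by Laplace. Your two-piece argument can be repaired, but this combined form is the cleaner and more robust route.
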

Clearly, Propositions \ref{order0} and \ref{o3} give Proposition \ref{mainprop}. First, as the constant $c$ in Proposition \ref{o3}  is independent from $\delta$,
we can choose $\delta$ small enough so that $O(e^{-c N^2})$ is negligible.  Indeed, since  $G_{N,M}^{-1}$ is at most of order $e^{C\delta N^2}$,
 for $\delta\le c/2C$
$$G_{N,M}  e^{-c N^2}\le e^{-c N^2/2}$$
is negligible compared to 
$$G_{N,M} e^{CM^2} e^{-\frac{\beta}{2}NM(\int V(\eta)d\mu_{\rm eq}(\eta)+\inf \mathcal J^{V;\boldsymbol{B}})} \frac{1}{N^{\frac{M}{2}}} $$
which is decaying only polynomially by Proposition \ref{order0}.
We get \eqref{ub} by using \eqref{clean},  Propositions \ref{order0} and \ref{o3}, we  choose 
 $\epsilon$ small enough so that $2\epsilon e^{4C}\le 1$ so that the terms in $e^{CM^2}$ disappear, 
 and we observe that $(N-M)/N\le 1$. To derive 
  the lower bound \eqref{lb}, we use  \eqref{counter} together with 
 Propositions \ref{order0} and \ref{o3}. in this case the term $((N-1)/N)^{\frac{\beta}{2} (N-1)}$ is of order one.

 The proof of Proposition \ref{o3}  is based on the following proposition.
\begin{prop}\label{o2} Under Assumption \ref{main-assume} and \ref{as},
there exists   positive constants $c, C, \delta_0$  so that for   $\delta\in (0,\delta_0)$,  $M\le \delta N$, for any 
$\eta_{1}, \cdots, \eta_{M}$ belonging to  $[c_{0}-\varepsilon,c_0+\varepsilon]$,  we have the following uniform estimate:
\begin{equation}
F_{N, M}\Xi\left(\eta_{1}, \cdots, \eta_{M}\right)\lesssim e^{CM^{2}}e^{-\frac{\beta NM}{2}\int V(\eta)d\mu_{eq}(\eta)}e^{-\frac{\beta}{2}N\sum_{j=1}^{M}\mathcal{J}^{V;\boldsymbol{B}}(\eta_{j})}+O(e^{-c N^2}).
\end{equation}
Moreover,  for $\eta\in [c_{0}-\varepsilon,c_0+\varepsilon]$
\begin{equation}\label{lkj}F_{N, 1}\Xi\left(\eta\right)\gtrsim e^{-\frac{\beta N}{2}( \int V(x)d\mu_{eq}(x)+\mathcal{J}^{V;\boldsymbol{B}}(\eta))}+O(e^{-c N^2}).\end{equation}
\end{prop}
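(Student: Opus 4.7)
The plan is to express $F_{N,M}\Xi(\eta_{1},\dots,\eta_{M})$ as the expectation of a single exponential linear statistic under the reference $\beta$-ensemble $\mu_{N-M,\beta}^{V;\boldsymbol{A}}$, and then to apply the concentration estimates collected in the appendix. Splitting $-\frac{N\beta}{2}V = -\frac{(N-M)\beta}{2}V-\frac{M\beta}{2}V$ in the density of $\mu_{N-M,\beta}^{\frac{N}{N-M}V;\boldsymbol{A}}$, the partition function ratio $F_{N,M}$ cancels exactly and one obtains
$$F_{N,M}\,\Xi(\eta_{1},\dots,\eta_{M}) \;=\; e^{-\frac{\beta(N-M)}{2}\sum_{j=1}^{M}V(\eta_{j})}\,\mu_{N-M,\beta}^{V;\boldsymbol{A}}\!\left(e^{\sum_{i=1}^{N-M}f(\lambda_{i})}\right),$$
where $f(x):=\beta\sum_{j=1}^{M}\ln|\eta_{j}-x|-\frac{\beta M}{2}V(x)$. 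Since the points $\eta_{j}\in(c_{0}-\varepsilon,c_{0}+\varepsilon)$ lie at a positive distance from $\boldsymbol{A}$ by Assumption \ref{as}, $f$ is real-analytic on a neighbourhood of $\boldsymbol{A}$, with $C^{k}$-norms on $\boldsymbol{A}$ of size $O(M)$.

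The second observation is that, since $\boldsymbol{S}\subset \boldsymbol{A}\subset \boldsymbol{B}$ and the variational inequalities in \eqref{theeq} restrict to $\boldsymbol{A}$, the equilibrium measure of the reference ensemble $\mu_{N-M,\beta}^{V;\boldsymbol{A}}$ is still $\mu_{\rm eq}$, and this ensemble satisfies all of Assumption \ref{main-assume} with a non-critical potential (the critical point $c_{0}$ has been excluded). The appendix therefore provides a speed-$N^{2}$ exponential concentration of $L_{N-M}$ around $\mu_{\rm eq}$, the first-order expansion $\mu_{N-M,\beta}^{V;\boldsymbol{A}}(\int g\,dL_{N-M})=\int g\,d\mu_{\rm eq}+O(\|g\|_{C^{k}}/N)$ for smooth $g$ from \cite{BG1,BG2}, and a sub-Gaussian bound on exponential moments of centered smooth linear statistics at the variance scale $\|f\|_{H^{1/2}}^{2}=O(M^{2})$. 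Applied to $f$ and split over the event $\{L_{N-M}$ close to $\mu_{\rm eq}\}$ and its complement, this yields
$$\mu_{N-M,\beta}^{V;\boldsymbol{A}}\!\left(e^{\sum_{i}f(\lambda_{i})}\right)\;\lesssim\; \exp\!\left((N-M){\textstyle\int} f\,d\mu_{\rm eq}+O(M^{2})\right)+O(e^{-cN^{2}}),$$
where the additive $e^{-cN^{2}}$ comes from the exceptional event: on its complement $f$ is deterministically bounded by $CM$ (since $\boldsymbol{A}$ and $(c_{0}-\varepsilon,c_{0}+\varepsilon)$ are disjoint, so $\ln|\eta_{j}-\lambda_{i}|$ is bounded), hence the integrand is at most $e^{CMN}$ while the event has probability $\leq e^{-c'N^{2}}$, giving a net contribution $\leq e^{CMN-c'N^{2}}=O(e^{-cN^{2}})$ as soon as $\delta<c'/C$.

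Using the defining relation $2\int\ln|\eta-x|\,d\mu_{\rm eq}(x)-V(\eta)=-\mathcal{J}^{V;\boldsymbol{B}}(\eta)$ for $\eta\in \boldsymbol{B}\setminus \boldsymbol{S}$ to absorb the deterministic prefactor, the exponent reduces to $-\frac{\beta(N-M)}{2}\sum_{j}\mathcal{J}^{V;\boldsymbol{B}}(\eta_{j})-\frac{\beta M(N-M)}{2}\int V\,d\mu_{\rm eq}$; replacing $(N-M)$ by $N$ in each term costs only an additive $O(M^{2})$ (both $\sum_{j}\mathcal{J}^{V;\boldsymbol{B}}(\eta_{j})$ and $M\int V\,d\mu_{\rm eq}$ being $O(M)$), producing the announced upper bound. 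The matching lower bound at $M=1$ follows from the same rewriting together with Jensen's inequality, $\mu_{N-1,\beta}^{V;\boldsymbol{A}}(e^{X})\geq\exp(\mu_{N-1,\beta}^{V;\boldsymbol{A}}(X))$; with $M=1$ the norms of $f$ are $O(1)$, so the $1/N$-correction of the mean is $O(1)$ and is absorbed into the multiplicative constant hidden in $\gtrsim$, and the $O(e^{-cN^{2}})$ term again accounts for the complement of the concentration event.

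The main technical obstacle is the quantitative control of the $O(M^{2})$ error uniformly in $M\leq\delta N$: it requires simultaneously the $1/N$-expansion of the mean of smooth linear statistics and a Gaussian exponential-moment bound on their centered version at the correct variance scale. This is classical in the one-cut regime \cite{BG1}, but in the multi-cut case the expansion of the mean Stieltjes transform is only available after conditioning on the number of eigenvalues in each connected component of $\boldsymbol{S}$, following the fixed-filling-fraction strategy of \cite{BG2}, before summing over the fillings. This is precisely why in Section \ref{proofprop1} Proposition \ref{o2} is first established in the connected-support case and then extended to the general setting.
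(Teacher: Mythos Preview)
Your proposal is correct and follows essentially the same route as the paper: rewrite $F_{N,M}\Xi$ as a single exponential moment $\mu_{N-M,\beta}^{V;\boldsymbol{A}}(e^{\sum_i h(\lambda_i)})$ with $h$ of size $O(M)$, apply the concentration/exponential-moment bound of Lemma~\ref{central} (which gives the $e^{CM^2}$ factor from $\|h\|_{\mathcal L}^2=O(M^2)$) together with the $1/N$ expansion of the mean from Lemma~\ref{core}, and use Jensen for the lower bound at $M=1$. Two small remarks: in the one-cut case the paper obtains the bound \emph{without} any $O(e^{-cN^2})$ correction, since Lemma~\ref{central} applies directly with no splitting needed; and the variance scale in the appendix is measured by the Lipschitz norm $\|h\|_{\mathcal L}^2$ (coming from the strict convexity of $V$ on $\boldsymbol{A}$) rather than $\|f\|_{H^{1/2}}^2$, though for the functions at hand both are $O(M^2)$.
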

Let us first deduce Proposition \ref{o3} from Proposition \ref{o2}.
The proof is straightforward  since by Proposition \ref{o2}
\begin{eqnarray*}
F_{N,M}L_{N,M}&\lesssim & e^{CM^2} e^{-\frac{\beta }{2}NM\int V(\eta)d\mu_{\rm eq}(\eta) -NM\frac{\beta}{2}\inf \mathcal J^{V;\boldsymbol{B}} }\left(\int_{c_0-\epsilon}^{c_0+\epsilon}e^{-N \tilde{\mathcal J}^{V;\boldsymbol{B}}(x)}  dx\right)^M\\
&&+O(e^{-c N^2})\end{eqnarray*}
where we can use 
Laplace method (recall we assume $\tilde{\mathcal{J}}^{V;\boldsymbol{B}}(c_0)=0, \frac{d}{dx} \tilde{\mathcal{J}}^{V;\boldsymbol{B}}(c_0)=0,\frac{d^{2}}{dx^{2}} \tilde{\mathcal{J}}^{V;\boldsymbol{B}}(c_0)>0$,  see  \cite[section 3.5.3]{AGZ} for details) to get
$$\int_{c_0-\epsilon}^{c_0+\epsilon} e^{-N\tilde {\mathcal J}^{V;\boldsymbol{B}}(\lambda)}d\lambda\approx  \frac{1}{\sqrt{N}}\,.$$
The proof of the lower bound is similarly deduced from \eqref{lkj}. 
\begin{rem} \label{beha}Note that if we would have assumed instead of $\frac{d^{2}}{dx^{2}} \tilde{\mathcal{J}}^{V;\boldsymbol{B}}(c_0)>0$ that for some $q>2$, $\tilde{\mathcal{J}}^{V;\boldsymbol{B}}(x)\simeq |x-c_0|^q$
in a neighborhood of $c_0$, we would have obtained
$$F_{N,M} L_{N,M}\lesssim Ce^{CM^2} e^{-\frac{\beta}{2}NM\int V(\eta)d\mu_{\rm eq}(\eta)} \frac{1}{N^{\frac{M}{q}}} +O(e^{-c N^2})$$
and criticality would have occurred at $\beta=2/q$.
\end{rem}

\subsection{Proof of Proposition \ref{order0} and \ref{o2} in the one cut case.}
To estimate $G_{N,M}$ and $F_{N,M}L_{N,M}$ in the one cut case, we shall rely on the following results from \cite{BG1}:
\begin{thm}\label{art0}\cite[Propositions 1.1 and  1.2]{BG1}
If $V$ satisfies Assumption \ref{main-assume} and \ref{as} on $\boldsymbol{A}$,  there exists a universal constant $e$, and constants $F_\beta^{\{k\}}$ 
 so that we have for $N$ large enough:
\begin{equation}\label{expZ}
 Z_{N,  \beta}^{V;\mathsf{A}} = N^{(\frac{\beta}{2})N + e}\exp\Big(\sum_{k =-2}^K N^{-k}\, F^{\{k\}}_{ \beta} + o(N^{-K})\Big)\,.
\end{equation}
 Moreover, define the \textit{correlators}  given for $x\in \mathbb C\backslash \boldsymbol{A}$ by: 
 \begin{equation}
    W_{1} (x):=\mu_{N,  \beta}^{V;\boldsymbol{A}}(\sum \frac{1}{x-\lambda_{i}}), \quad
     W_1^{\{-1\}}(x):={\mu_{\mathrm{eq}}}(\frac{1}{x-\lambda}).
    \end{equation}
Then
\begin{equation}
\label{expco0}W_{1}(x) =  NW_1^{\{-1\}}(x) +O(1).
\end{equation}
 \eqref{expco0} holds uniformly for $x$ in compact regions outside $\boldsymbol{A}$(in particular near the critical point $c_{0}$).
\end{thm}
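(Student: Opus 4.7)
The plan is to follow the Dyson--Schwinger (loop equation) scheme for one-cut $\beta$ ensembles, establishing the correlator expansion \eqref{expco0} first and then using it as input for the partition function expansion \eqref{expZ}. Both statements are then linked by an interpolation argument.

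First I would derive the exact loop equation by integration by parts against the density $d\mu_{N,\beta}^{V;\boldsymbol{A}}$: for $x \notin \boldsymbol{A}$,
\[
W_2(x,x) + W_1(x)^2 + N\bigl(1 - \tfrac{2}{\beta}\bigr) W_1'(x) - N^2 V'(x) W_1(x) + N^2 P(x) = 0,
\]
where $P$ is a polynomial determined by $V$ and the (bounded) edge data of $\boldsymbol{A}$, and $W_2$ denotes the two-point correlator. Combined with a preliminary concentration estimate, obtained either from Theorem \ref{large-deviation} or via a log-Sobolev inequality applied to the Lipschitz observable $\xi \mapsto 1/(x-\xi)$ (the concentration results stated in the appendix would be invoked here), this yields the crude bound
\[
W_1(x) = N W_1^{\{-1\}}(x) + O(N^{1/2} \log N),
\]
uniformly for $x$ in compact subsets of $\mathbb{C} \backslash \boldsymbol{A}$.

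The heart of the argument is the inversion of the master operator obtained by linearizing the loop equation around the leading order,
\[
\mathcal{K}[f](x) := \bigl(2 W_1^{\{-1\}}(x) - V'(x)\bigr) f(x).
\]
The non-criticality assumption (Assumption \ref{main-assume}, fourth bullet) guarantees that $V'(x) - 2 W_1^{\{-1\}}(x)$ has a simple square-root branch cut on $\boldsymbol{S}$ with prefactor $S(x)>0$; in the one-cut case this lets us write $\mathcal{K}^{-1}$ explicitly as a contour integral involving $1/S$, bounded on the space of holomorphic functions on $\mathbb{C} \backslash \boldsymbol{A}$ decaying at infinity. Feeding the a priori estimate through $\mathcal{K}^{-1}$ improves the error on $W_1 - N W_1^{\{-1\}}$ from $O(N^{1/2}\log N)$ to $O(1)$, which is \eqref{expco0}; iterating the bootstrap (and using analogous loop equations for higher correlators) produces the full $N^{-k}$ expansion.

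To obtain \eqref{expZ} I would then interpolate. Choose a convex reference potential $V_0$ on $\boldsymbol{A}$ (e.g.\ Gaussian truncated to $\boldsymbol{A}$) for which $Z_{N,\beta}^{V_0;\boldsymbol{A}}$ is computable via a Selberg--Mehta integral, producing the prefactor $N^{(\beta/2)N+e}$. Setting $V_t := tV + (1-t)V_0$, one has
\[
\partial_t \log Z_{N,\beta}^{V_t;\boldsymbol{A}} = -\tfrac{N\beta}{2} \oint \frac{dx}{2\pi i}\, (V-V_0)(x)\, W_1(x),
\]
where the right-hand correlator is taken with respect to $V_t$. Integrating the expansion of $W_1$ term by term in $t$ from $0$ to $1$ produces the coefficients $F_\beta^{\{k\}}$ in \eqref{expZ}. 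The principal obstacle throughout is the uniform inversion of $\mathcal{K}$: strict positivity of $S$ on $\boldsymbol{S}$ is what keeps $\mathcal{K}^{-1}$ bounded as $N \to \infty$, and the distinction between soft and hard edges must be tracked carefully to close the contour estimates. Once this is in place, both the correlator bound and the partition function expansion follow by a routine bootstrap and interpolation.
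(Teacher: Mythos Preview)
The paper does not prove Theorem~\ref{art0}: it is quoted verbatim from \cite[Propositions 1.1 and 1.2]{BG1} and used as a black box to estimate $G_{N,M}$ and $F_{N,M}\Xi$ in the one-cut case. There is therefore no ``paper's own proof'' to compare against.

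That said, your sketch is a faithful outline of how \cite{BG1} actually establishes the result: derive the exact loop equation, obtain an a priori bound on $W_1-NW_1^{\{-1\}}$ from concentration, linearize around the equilibrium to isolate the master operator $\mathcal K$, use the one-cut square-root structure and strict positivity of $S$ to invert $\mathcal K$ on holomorphic functions off $\boldsymbol{A}$, bootstrap to $O(1)$ and beyond, then recover the partition function expansion by interpolating from a Selberg-computable reference potential and integrating the correlator expansion in $t$. The only cautionary remarks are that (i) the ``polynomial $P(x)$'' in your loop equation is, in the general analytic setting, really a contour integral of $(V'(x)-V'(\xi))/(x-\xi)$ against the one-point function rather than a polynomial, and (ii) one must verify that the whole family $\{V_t\}_{t\in[0,1]}$ remains one-cut and off-critical so that $\mathcal K_t^{-1}$ stays uniformly bounded---this is the step where the choice of $V_0$ and the convexity hypothesis in Assumption~\ref{as} do real work. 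With those caveats, your proposal matches the argument in the cited reference.
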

\noindent
{\bf Proof of Proposition \ref{order0}.}
The fact that 
\begin{equation}\label{GNM0}
G_{N, M}\approx C_{M}e^{MNA_{\beta}}\frac{1}{N^{\frac{M\beta}{2}}}(\frac{N-M}{N})^{\frac{\beta}{2}(N-M)}, 
\end{equation}
where $A_{\beta}=-2F^{\{-2\}}_\beta$ does not depend on $N$ or $M$ and $C_{M}\lesssim e^{CM^{2}}$ is a direct consequence of \eqref{expZ}.
Moreover,  the large deviation principle of Theorem \ref{large-deviation}(1) yields 
$$-F^{\{-2\}}_\beta=\inf \mathcal E= \frac{\beta}{2}( \int V(x)d\mu_{\rm eq}(x)-\int\int\log|x-y|d\mu_{\rm eq}(x)d\mu_{\rm eq}(y))\,.$$
Also,  by definition, since the effective potential is constant on the support of the equilibrium measure,
$$\inf \mathcal J^{V;\boldsymbol{B}}=-C_V= \int V(x)d\mu_{\rm eq}(x)-2\int\int\log|x-y|d\mu_{\rm eq}(x)d\mu_{\rm eq}(y)\,.$$
As a consequence
$$A_{\beta}=\frac{\beta}{2}\inf_{\xi\in\boldsymbol{A}}\mathcal{J}^{V;\boldsymbol{A}}(\xi)+\frac{\beta}{2}\int V(\eta)d\mu_{\mathrm{eq}}(\eta)\,.$$

\noindent
{\bf Proof of Proposition \ref{o2}.} In the one-cut case we can prove this proposition without the error terms $O(e^{-c N^2})$ which we will need to deal with the several cut case.
The main tool is to 
use concentration of measure. In fact we can write
$$F_{N,M}\Xi(\eta_1,\ldots,\eta_M)=\mu_{N-M,\beta}^{  V; \boldsymbol{A}}\left( e^{\sum_{i=1}^{N-M} h_\eta (\lambda_i)}\right)$$
with 
$$h_\eta (x)=\beta \sum_{i=1}^M (\ln(\eta_i-x)-\frac{1}{2} V(\eta_i)-\frac{1}{2} V(x))\,.$$
Eventhough this function depends on $M$,
concentration inequalities will allow a uniform control. We develop the necessary estimates in the appendix, 
see Lemma \ref{central}. $\|f\|_{\mathcal L}$ denotes the Lipschitz norm and $\|f\|_\infty$ the uniform bound on a neighborhood of $\boldsymbol{A}$.
 We apply Lemma \ref{central} with $h=h_\eta$. As the $\eta_i$ are away from $\boldsymbol{A}$, {$\|h_\eta\|_{\mathcal L}^2$} is uniformly bounded by $CM^2$
for some finite constant $C$, whereas $\|h_\eta\|_\infty$ is of order $M$.
Hence, we deduce from Lemma \ref{central} that
\begin{equation}\label{polk} e^{-CM} e^{(N-M)\mu_{\rm eq }(h_\eta)}\le F_{N,M} \Xi(\eta_1,\ldots,\eta_M)\le e^{C M^2} e^{ (N-M)  \mu_{\rm eq}( h_\eta) }\,.\end{equation}
Note that we can replace above $(N-M)\mu_{\rm eq }(h_\eta)$ by $N\mu_{\rm eq }(h_\eta)$ up to an error of order $ M^2$ which amounts to change the constant $C$.
This  completes the proof of  Proposition \ref{o2} since
\begin{eqnarray}
\mu_{\rm eq}( h_\eta)&=&\frac{\beta}{2}( \sum_{i=1}^M  2 \int \log |\eta_i-x|d\mu_{\rm eq}(x)-V(\eta_i))-\frac{\beta}{2} M \int V(x)d\mu_{\rm eq}(x)\nonumber\\
&=&-\frac{\beta}{2}\sum_{i=1}^M \mathcal J^{V;\boldsymbol{B}}(\eta_i) -\frac{\beta}{2} M \int V(x)d\mu_{\rm eq}(x)\,.\label{eqr}\end{eqnarray}

\subsection{{Proof of   Propositions \ref{order0} and \ref{o2} in the general multi-cut case}}\label{proofprop}

In the multi cut case, we have to be more careful since the number of eigenvalues that are in each connected component of the support of $\mu_{\rm eq}$
is not a priori fixed. The idea is therefore that we will have to sum over all possible number of eigenvalues in these components. 
Our proof is based on estimates from \cite{BG2} 
on the fixed filling fraction measure  {(see Definition \ref{defi333} below)}. We introduce some new notation below.\par
{From the viewpoint of Large Deviation Principle on $L_{N}$, the number of eigenvalues in the interval $A_{h}$, which we will call the filling fractions,  should in principle be proportionnal  to $\mu_{eq}(S_{h})$, which is the mass of equilibrium measure accumulated on $A_{h}$.}\par
{Thus, let's define $\boldsymbol{f}_{\star}$ as  the $g$-tuple denoting the mass of the equilibrium measure in each of the intervals that comprise the support of the equilibrium measure, i.e.
\begin{equation} \label{eps} \boldsymbol{f}_{\star}:=\left(\mu_{\mathrm{eq}}(S_{1}), \cdots, \mu_{\mathrm{eq}}(S_{g})\right)
\end{equation}
}
{
   In order to describe how the $N$ eigenvalues are distributed in the $g$ intervals $\AAA_{h}$, we let
\begin{equation}\label{gtuple}
   \mathcal{E}_g:=\left\lbrace(f_{1},  \cdots , f_{g})|\sum_{h=1}^{g} f_{h}=1, f_{1},  \ldots , f_{g}\geq 0\right\rbrace .
\end{equation}}\par
  Now we can define the fixed filling fraction probability measure:
\begin{definition}\label{defi333}
For any $\overrightarrow{\boldsymbol{N}}=(N_1,\ldots,N_g)$ so that $\overrightarrow{\boldsymbol{N}}/N\in \mathcal{E}_g$, let the fixed filling fraction probability measure $d\mu_{N, \frac{\overrightarrow{\boldsymbol{N}}}{N}, \beta}^{V;\boldsymbol{A}}$ be given by:
\begin{equation}\label{defi3}
\begin{aligned}
d\mu_{N, \frac{\overrightarrow{\boldsymbol{N}}}{N}, \beta}^{V;\boldsymbol{A}}(\boldsymbol{\lambda}) 
 := & \frac{1}{Z_{N, \boldsymbol{f}, \beta}^{V;\boldsymbol{A}}}\prod_{h = 1}^g \Big[\prod_{i = 1}^{N_h} \mathrm{d}\lambda_{h, i}\, \mathbf{1}_{\boldsymbol{A}_{h}}(\lambda_{h, i})\, e^{-\frac{\beta N}{2}\, V(\lambda_{h, i})}\, \prod_{1 \leq i < j \leq N} |\lambda_{h, i} - \lambda_{h, j}|^{\beta}\Big] \nonumber \\
&  \times \prod_{1 \leq h < h' \leq g} \prod_{\substack{1 \leq i \leq N_h \\ 1 \leq j \leq N_{h'}}} |\lambda_{h, i} - \lambda_{h', j}|^{\beta}, 
\end{aligned}
\end{equation}
where $Z_{N, \boldsymbol{f}, \beta}^{V;\boldsymbol{A}}$ is the partition function. 
\end{definition}
  The following precise 
estimate of the fixed filling fraction measure from \cite[Theorem 1.4]{BG2} will be essential in our proof. It extends Theorem \ref{art0}. 

\begin{thm}\label{part}
If $V$ satisfies Assumption \ref{main-assume} and \ref{as} on $\boldsymbol{A}$,  there exists $t > 0$ such that,  uniformly for $\frac{\overrightarrow{\boldsymbol{N}}}{N} \in \mathcal{E}_g$ that satisfies $|\frac{\overrightarrow{\boldsymbol{N}}}{N} - \boldsymbol{f}_{\star}| < t$,  we have:
\begin{equation}
\label{sqiqq}\frac{N!}{\prod_{h = 1}^{g} (N_h)!}\, Z_{N, \frac{\overrightarrow{\boldsymbol{N}}}{N}, \beta}^{V;\mathsf{A}} = N^{(\frac{\beta}{2})N + e}\exp\Big(\sum_{k =-2}^K N^{-k}\, F^{\{k\}}_{\frac{\overrightarrow{\boldsymbol{N}}}{N}, \beta} + o(N^{-K})\Big)\,.
\end{equation}
$e$ is some universal constant,  $F^{\{k\}}_{\boldsymbol{f}, \beta}$ extends as a smooth function for $\boldsymbol{f}$ close enough to $\boldsymbol{f}_{\star}$,  and at the value $\boldsymbol{f} = \boldsymbol{f}_{\star}$,  the derivative of $F^{\{-2\}}_{\boldsymbol{f}, \beta}$ vanishes and its Hessian is negative definite. 
 Assume that $\overrightarrow{\boldsymbol{N}}/N$ converges towards $\boldsymbol{f}$. Then, the law of the empirical measure
 $L_N$ under  $\mu_{N, \boldsymbol{f}, \beta}^{V;\boldsymbol{A}}$ satisfies a large deviation principle with speed $N^2$ and
 good rate function $\tilde{\mathcal E}_{\boldsymbol{f}}$ which is minimized at a unique probability measure $\mu_{{\rm eq}, \boldsymbol{f}}$, which is also the minimum of $\mathcal E$ under the constraint that $\mu(\boldsymbol{A}_h)=f_h$.
 In particular $L_N$ converges $\mu_{N, \frac{\overrightarrow{\boldsymbol{N}}}{N}, \beta}^{V;\boldsymbol{A}}$ almost surely to $\mu_{{\rm eq},\boldsymbol{f}}$.
 Moreover,  for $x\in \mathbb C\backslash \boldsymbol{A}$ let 
 \begin{equation}
    W_{\frac{\overrightarrow{\boldsymbol{N}}}{N}}(x):=\mu_{N, \frac{\overrightarrow{\boldsymbol{N}}}{N}, \beta}^{V;\boldsymbol{A}}(\sum \frac{1}{x-\lambda_{i}}), \quad
     W_{\boldsymbol{f}}^{\{-1\}}(x):=\mu_{\mathrm{eq}, \boldsymbol{f}}(\frac{1}{x-\lambda}).
    \end{equation}
Then,  there exists $t > 0$ such that,  uniformly for $\boldsymbol{f} \in \mathcal{E}_g$ and $|\boldsymbol{f} - \boldsymbol{f}_{\star}| < t$,  we have an expansion for the correlators:
\begin{equation}
\label{expco}W_{\frac{\overrightarrow{\boldsymbol{N}}}{N}}(x) =  NW_{\frac{\overrightarrow{\boldsymbol{N}}}{N}}^{\{-1\}}(x) +O(1).
\end{equation}
 \eqref{expco} holds uniformly for $x$ in compact regions outside $\boldsymbol{A}$(in our case in particular near the critical point $c_{0}$). $\boldsymbol{f}\rightarrow
 W_{\boldsymbol{f}}^{\{-1\}}(x)$ extends as a  smooth function  in a neighborhood of  $\boldsymbol{f}_{\star}$ for any $x\in \mathbb{C}\backslash \boldsymbol{S}$. 
\end{thm}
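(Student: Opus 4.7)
The overall strategy combines three ingredients: a large deviation principle for the fixed filling fraction ensemble, the Schwinger--Dyson hierarchy for its correlators, and an interpolation in the potential that converts correlator asymptotics into partition function asymptotics. For $\boldsymbol{f}$ close to $\boldsymbol{f}_{\star}$ I would first mimic the proof of Theorem \ref{large-deviation} on the constrained subspace $\{\mu\in M_1(\boldsymbol{A}):\mu(\boldsymbol{A}_h)=f_h\text{ for every }h\}$. Strict convexity of the logarithmic energy $\mathcal{E}$ on this convex set gives a unique minimizer $\mu_{{\rm eq},\boldsymbol{f}}$ and the standard two-sided LDP with good rate function $\tilde{\mathcal{E}}_{\boldsymbol{f}}$; almost sure convergence of $L_N$ under $\mu_{N,\overrightarrow{\boldsymbol{N}}/N,\beta}^{V;\boldsymbol{A}}$ to $\mu_{{\rm eq},\boldsymbol{f}}$ then follows as in Remark \ref{weak convergence}. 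A direct adaptation of the concentration estimate of Lemma \ref{central} yields exponential concentration of the empirical Stieltjes transform $W_{\overrightarrow{\boldsymbol{N}}/N}(x)/N$ around $W_{\boldsymbol{f}}^{\{-1\}}(x)$, uniformly in $x$ at positive distance from $\boldsymbol{A}$ and in $\boldsymbol{f}$ in a small ball around $\boldsymbol{f}_{\star}$.

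Next I would derive the Schwinger--Dyson hierarchy by integration by parts in each variable $\lambda_{h,i}$: this produces linear equations for the multi-point correlators, whose linearization around $W_{\boldsymbol{f}}^{\{-1\}}$ is a master operator $\mathcal{K}_{\boldsymbol{f}}$ acting on functions holomorphic in a neighborhood of $\boldsymbol{A}$. The central technical point is invertibility of $\mathcal{K}_{\boldsymbol{f}}$ on the right function space: once the filling fractions are frozen the cokernel that plagues the unrestricted multi-cut problem is killed, and the strict positivity of $S(x)$ on $\boldsymbol{S}$ together with the hard/soft edge dichotomy of Assumption \ref{main-assume} allows one to construct a bounded inverse. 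Substituting the concentration bounds into the hierarchy and inverting $\mathcal{K}_{\boldsymbol{f}}$ recursively yields, order by order, $W_{\overrightarrow{\boldsymbol{N}}/N}(x)=N\,W_{\boldsymbol{f}}^{\{-1\}}(x)+\sum_{k\ge 0}N^{-k}W_{\boldsymbol{f}}^{\{k\}}(x)+o(N^{-K})$ with coefficients smooth in $\boldsymbol{f}$ via smoothness of $\mathcal{K}_{\boldsymbol{f}}^{-1}$, establishing \eqref{expco}.

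To upgrade this to the partition function expansion \eqref{sqiqq}, I would interpolate $V_t=(1-t)V_0+tV$ with a reference potential $V_0$ whose fixed filling fraction partition function is explicitly computable (say a sum of one-cut Gaussians pinned to the intervals $\boldsymbol{A}_h$, reducing to Selberg-type integrals). Then
\[
\partial_t \log Z^{V_t;\boldsymbol{A}}_{N,\overrightarrow{\boldsymbol{N}}/N,\beta}=-\frac{\beta N}{2}\,\mu_{N,\overrightarrow{\boldsymbol{N}}/N,\beta}^{V_t;\boldsymbol{A}}\!\Bigl(\sum_i(V-V_0)(\lambda_i)\Bigr),
\]
and rewriting the right hand side as a contour integral of $W_1^{V_t}$ against $V-V_0$ and inserting the correlator expansion produces, after integrating in $t$, the claimed asymptotic series with smooth coefficients $F^{\{k\}}_{\boldsymbol{f},\beta}$. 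The identification $-F^{\{-2\}}_{\boldsymbol{f},\beta}=\inf \mathcal{E}_{\boldsymbol{f}}$ is forced by the leading order; its gradient in $\boldsymbol{f}$ vanishes at $\boldsymbol{f}_{\star}$ because the unconstrained equilibrium measure $\mu_{\rm eq}$ automatically satisfies the filling fraction constraints there, and the Hessian is negative definite by strict convexity of the logarithmic energy restricted to the affine slice $\sum_h f_h=1$.

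The main obstacle I expect is the uniform invertibility of $\mathcal{K}_{\boldsymbol{f}}$ in the multi-cut case together with propagation of the resulting error estimates uniformly in $\boldsymbol{f}$ close to $\boldsymbol{f}_{\star}$, in $x$ in a neighborhood of $\boldsymbol{A}\cup\{c_0\}$, and through all orders of the recursion so that the coefficients $F^{\{k\}}_{\boldsymbol{f},\beta}$ and $W_{\boldsymbol{f}}^{\{k\}}(x)$ emerge smooth in $\boldsymbol{f}$. The non-criticality built into Assumption \ref{main-assume} is tailored precisely to make this inversion possible; turning the whole scheme into rigorous quantitative bookkeeping is where the bulk of the technical work lies.
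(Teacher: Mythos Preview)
The paper does not prove Theorem~\ref{part} at all: it is quoted verbatim as \cite[Theorem~1.4]{BG2} and used as a black box, so there is no ``paper's own proof'' to compare against. Your sketch is in fact a faithful outline of the strategy carried out in \cite{BG2} (LDP for the constrained ensemble, Schwinger--Dyson hierarchy, invertibility of the master operator $\mathcal{K}_{\boldsymbol{f}}$ once filling fractions are frozen, interpolation in the potential to pass from correlator to partition function asymptotics), and you have correctly identified the uniform invertibility of $\mathcal{K}_{\boldsymbol{f}}$ as the place where the real work lies. For the purposes of this paper, however, the intended ``proof'' is simply a citation.
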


\noindent
{\bf Proof of Proposition \ref{order0}.}
By the partition function estimate from Theorem \ref{part},  for a sufficiently small $\kappa$, we have
\begin{equation}\label{G1}
\begin{aligned} 
&Z_{N-M, \beta}^{V; \boldsymbol{A}}=\sum_{N_{1}+\cdots+N_{g}=N-M}\frac{(N-M)!}{N_{1}!\cdots N_{g}!}\cdot Z^{V; \boldsymbol{A}}_{N-M, \frac{\overrightarrow{\boldsymbol{N}}}{N-M}, \beta}, \\
&\approx
 \sum_{ 
 \begin{subarray}
\quad N_{1}+\cdots+N_{g}=N-M, \\
 |\frac{\overrightarrow{\boldsymbol{N}}}{N-M}-\boldsymbol{f_{\star}}|<\kappa
 \end{subarray}
 }(N-M)^{(\frac{\beta}{2})(N-M)+e}\exp\left((N-M)^{2}F^{\{- 2\}}_{\frac{\overrightarrow{\boldsymbol{N}}}{N-M}, \beta}+(N-M)F^{\{-1\}}_{\frac{\overrightarrow{\boldsymbol{N}}}{N-M}, \beta}\right)\\
 &
+O(e^{-C_\kappa N^2})Z_{N-M, \beta}^{V; \boldsymbol{A}}\end{aligned}
\end{equation}
with some $C_\kappa>0$.
In the last step we applied the large deviation principle for the empirical measure $L_{N-M}$; in other words,  the sum over all $\overrightarrow{N}$ such that $|\frac{\overrightarrow{\boldsymbol{N}}}{N-M}-\boldsymbol{f}_{\star}|\geq\kappa$ divided by 
$Z_{N-M, \beta}^{V; \boldsymbol{A}}$
 is negligible since it is the probability that the filling fractions are away from the equilibrium ones, a set on which  the distance between the empirical measure $L_{N-M}$ and the equilibrium measure is positive. We used Theorem \ref{part} to estimate each partition functions in the remaining sum. 
Similarly, 
\begin{equation}\label{G2}
\begin{aligned}
 Z_{N, \beta}^{V; \boldsymbol{A}}&=\sum_{N_{1}+\cdots+N_{g}=N}\frac{N!}{N_{1}!\cdots N_{g}!}\cdot Z^{\frac{N}{N}; \boldsymbol{A}}_{N, \frac{\overrightarrow{\boldsymbol{N}}}{N}, \beta}, \\
&\approx \sum_{\begin{subarray}
\quad
N_{1}+\cdots+N_{g}=N, \\
|\frac{\overrightarrow{\boldsymbol{N}}}{N}-\boldsymbol{f_{\star}}|<\kappa\end{subarray}}(N)^{(\frac{\beta}{2})(N)+e}\exp\left((N)^{2}F^{\{-2\}}_{\frac{\overrightarrow{\boldsymbol{N}}}{N},\beta}+(N)F^{\{-1\}}_{\frac{\overrightarrow{\boldsymbol{N}}}{N},\beta}\right).
\end{aligned}
\end{equation}\\
All that is left to do is to analyze the limiting behavior of : 
\begin{equation}\label{G3}
L_{{K}}:= \sum_{\begin{subarray}
\quad
N_{1}+\cdots+N_{g}=K, \\
|\frac{\overrightarrow{\boldsymbol{N}}}{K}-\boldsymbol{f_{\star}}|<\kappa\end{subarray}} \exp((K)^{2}F^{\{-2\}}_{\frac{\overrightarrow{\boldsymbol{N}}}{K}, \beta}+(K)F^{\{-1\}}_{\frac{\overrightarrow{\boldsymbol{N}}}{K}, \beta}) .
\end{equation}
Here $\overrightarrow{\boldsymbol{N}}=(N_{1}, \cdots, N_{g})$ with $\sum {N_{i}}=K$.
{This is done in Lemma \ref{lemG4} below, from where the rest of the argument is exactly as in the one-cut case:}
\begin{lem}\label{lemG4}
\begin{equation}\label{gfd}
L_{{K}}\approx \exp(K^{2}F^{\{-2\}}_{\boldsymbol{f_{\star}},\beta}+KF^{\{-1\}}_{\boldsymbol{f_{\star}},\beta})
\end{equation}
\end{lem}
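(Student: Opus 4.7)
\textbf{Proof plan for Lemma \ref{lemG4}.}
The strategy is to Taylor-expand the smooth functions $F^{\{-2\}}_{\boldsymbol{f},\beta}$ and $F^{\{-1\}}_{\boldsymbol{f},\beta}$ around the equilibrium filling fraction $\boldsymbol{f_\star}$, factor out the value at $\boldsymbol{f_\star}$ from the sum, and show that the remaining sum over integer vectors $\overrightarrow{\boldsymbol{N}}$ on the simplex $\sum N_h=K$ is of order $1$ because it is essentially a shifted Gaussian theta-like sum with negative-definite quadratic form.

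First, by Theorem \ref{part}, $F^{\{-2\}}_{\boldsymbol{f},\beta}$ is smooth near $\boldsymbol{f_\star}$ with vanishing gradient and negative-definite Hessian $H$ (on the tangent space $T=\{\vec v:\sum v_h=0\}$ to the simplex), and $F^{\{-1\}}_{\boldsymbol{f},\beta}$ is smooth near $\boldsymbol{f_\star}$. Setting $\vec{m}=\overrightarrow{\boldsymbol{N}}-K\boldsymbol{f_\star}\in T$ and shrinking $\kappa$, I would write, uniformly for $|\vec{m}|\le \kappa K$,
\begin{align*}
K^2\, F^{\{-2\}}_{\overrightarrow{\boldsymbol{N}}/K,\beta}
&=K^2 F^{\{-2\}}_{\boldsymbol{f_\star},\beta}+\tfrac{1}{2}\vec{m}^{T}H\vec{m}+O\!\left(\tfrac{|\vec{m}|^{3}}{K}\right),\\
K\, F^{\{-1\}}_{\overrightarrow{\boldsymbol{N}}/K,\beta}
&=K F^{\{-1\}}_{\boldsymbol{f_\star},\beta}+\nabla F^{\{-1\}}_{\boldsymbol{f_\star},\beta}\!\cdot\vec{m}+O\!\left(\tfrac{|\vec{m}|^{2}}{K}\right).
\end{align*}
Choosing $\kappa$ small enough so that the cubic remainder is dominated by $\tfrac{1}{4}|\vec m^{T}H\vec m|$, and using negative-definiteness of $H$ on $T$ to bound $\vec{m}^{T}H\vec{m}\le -c|\vec{m}|^{2}$ for some $c>0$, I can factor out $\exp(K^{2}F^{\{-2\}}_{\boldsymbol{f_\star},\beta}+KF^{\{-1\}}_{\boldsymbol{f_\star},\beta})$ and reduce the claim to showing that
\[
S_{K}\;:=\;\sum_{\vec{m}\in (\mathbb Z^{g}-K\boldsymbol{f_\star})\cap T,\ |\vec m|\le \kappa K}
\exp\!\Big(\tfrac{1}{2}\vec{m}^{T}H\vec{m}+\nabla F^{\{-1\}}_{\boldsymbol{f_\star},\beta}\cdot\vec{m}+O(\tfrac{|\vec m|^{3}}{K}+\tfrac{|\vec m|^{2}}{K})\Big)
\]
is bounded from above and below by universal positive constants.

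For the upper bound I would dominate $S_K$ by $\sum_{\vec m\in T\cap \mathbb Z^{g-1}} e^{-\frac{c}{4}|\vec m|^{2}+C|\vec m|}$, a convergent lattice Gaussian sum of order one. For the lower bound it suffices to keep only the $O(1)$ many integer points with $|\vec m|\le R$ for a large fixed $R$; each contributes an $O(1)$ factor, giving $S_{K}\gtrsim 1$. The main obstacle is technical bookkeeping: the lattice $\mathbb Z^{g}\cap \{\sum N_h=K\}$ is shifted from $K\boldsymbol{f_\star}$ by a non-integer vector and lives on the tangent space of the simplex, so one must parametrize it by $g-1$ integer coordinates and verify that $H$ is non-degenerate in those coordinates (which follows from the negative-definiteness of $H$ on $T$ given by Theorem \ref{part}). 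Once this Gaussian reduction is in place, Lemma \ref{lemG4} follows, and the rest of the proof of Proposition \ref{order0} proceeds as in the one-cut case by plugging \eqref{gfd} into \eqref{G1}--\eqref{G3}.
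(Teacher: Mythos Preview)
Your proposal is correct and follows essentially the same approach as the paper. The paper's proof is a slightly coarser version of yours: rather than writing out the full Taylor expansions with the explicit Hessian $H$ and gradient $\nabla F^{\{-1\}}_{\boldsymbol{f_\star},\beta}$, it records only the three consequences $|F^{\{-1\}}_{\boldsymbol{f},\beta}-F^{\{-1\}}_{\boldsymbol{f_\star},\beta}|\le C|\boldsymbol{f}-\boldsymbol{f_\star}|$, $F^{\{-2\}}_{\boldsymbol{f},\beta}-F^{\{-2\}}_{\boldsymbol{f_\star},\beta}\le -c|\boldsymbol{f}-\boldsymbol{f_\star}|^{2}$, and $|F^{\{-2\}}_{\boldsymbol{f},\beta}-F^{\{-2\}}_{\boldsymbol{f_\star},\beta}|\le C|\boldsymbol{f}-\boldsymbol{f_\star}|^{2}$, then bounds the ratio $L_K/\exp(K^{2}F^{\{-2\}}_{\boldsymbol{f_\star},\beta}+KF^{\{-1\}}_{\boldsymbol{f_\star},\beta})$ by $\sum \exp(-cK^{2}|\tfrac{\overrightarrow{\boldsymbol{N}}}{K}-\boldsymbol{f_\star}|^{2}+CK|\tfrac{\overrightarrow{\boldsymbol{N}}}{K}-\boldsymbol{f_\star}|)$, which is the same Gaussian lattice sum you obtain; for the lower bound it simply picks a single $\overrightarrow{\boldsymbol{N}}$ with $|\tfrac{\overrightarrow{\boldsymbol{N}}}{K}-\boldsymbol{f_\star}|\lesssim 1/K$, which is exactly your ``keep one point with $|\vec m|=O(1)$'' argument. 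Your more explicit bookkeeping of the Hessian on the tangent space $T$ is not needed at the level of precision the lemma demands (only $\approx$, not an asymptotic with a specific constant), which is why the paper can get away with the cruder inequalities.
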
 
\begin{proof}
According to Theorem \ref{part}, { for ${\boldsymbol{f}}$ sufficiently close to $\boldsymbol{f}_{\star}$, $F_{\boldsymbol{f}}^{\{-1\}}$ and $F_{\boldsymbol{f}}^{\{-2\}}$ are smooth,  and the Hessian of $F^{\{-2\}}_{\eee,\beta}$ is negative definite at $\boldsymbol{f_{\star}}$.} Thus we can find constant c, C such that for $|\boldsymbol{f}-\boldsymbol{f_{\star}}|\le \kappa <t$, we have 
\begin{equation}\label{lem1}|F^{\{-1\}}_{\boldsymbol{f}, \beta}-F^{\{-1\}}_{\boldsymbol{\boldsymbol{f_{\star}}}, \beta}|\leq C|\boldsymbol{f}-\boldsymbol{f_{\star}}|, \end{equation}
\begin{equation}\label{lem2}
F^{\{-2\}}_{\boldsymbol{f}, \beta}-F^{\{-2\}}_{\boldsymbol{\boldsymbol{f_{\star}}}, \beta}\leq-c|\boldsymbol{f}-\boldsymbol{f_{\star}}|^{2}, \end{equation}
 \begin{equation}\label{lem3}|F^{\{-2\}}_{\boldsymbol{f}, \beta}-F^{\{-2\}}_{\boldsymbol{\boldsymbol{f_{\star}}}, \beta}|\leq C|\boldsymbol{f}-\boldsymbol{f_{\star}}|^{2}.\end{equation}

{We first derive the lower bound of $L_{K}$. Indeed, there exists at least one $\overrightarrow{\boldsymbol{N_{1}}}:=(N_{1},\cdots, N_{g})$,  such that  $|\frac{\overrightarrow{\boldsymbol{N_{1}}}}{K}-\boldsymbol{f_{\star}}|{\lesssim}\frac{1}{K}$ and $N_{1}+\cdots+ N_{g}=K$. Thus
we can easily get the following lower bound of $L_{k}$ with (\ref{lem1}), (\ref{lem3}):}
\begin{equation} 
L_{{K}}{\geq \exp(K^{2}F^{\{-2\}}_{\frac{\overrightarrow{\boldsymbol{N}}_{1}}{K}, \beta}+KF^{\{-1\}}_{\frac{\overrightarrow{\boldsymbol{N}}_{1}}{K}, \beta})\gtrsim} \exp(K^{2}F^{\{-2\}}_{\boldsymbol{f_{\star}, \beta} }+KF^{\{-1\}}_{\boldsymbol{f_{\star}}, \beta}).
\end{equation}
Next,  we compute the upper bound of $L_{K}$.
Direct computation leads to
\begin{equation}\label{tempupper}
\begin{aligned}
&\frac{L_{K}}{\exp(K^{2}F^{\{-2\}}_{\boldsymbol{f_{\star}}, \beta}+KF^{\{-1\}}_{\boldsymbol{f_{\star}}, \beta})}\\
&=\sum_{\begin{subarray}
\quad
N_{1}+\cdots+N_{g}=K, \\
|\frac{\overrightarrow{\boldsymbol{N}}}{K}-\boldsymbol{f_{\star}}|<\kappa\end{subarray}} \exp(K^{2}(F^{\{-2\}}_{\frac{\overrightarrow{\boldsymbol{N}}}{K}, \beta}-F^{\{-2\}}_{\eee_{\star}, \beta})+K(F^{\{-1\}}_{\frac{\overrightarrow{\boldsymbol{K}}}{K}, \beta}-F^{\{-2\}}_{\eee_{\star}, \beta}))\\
&\leq  \sum_{|\frac{\overrightarrow{\boldsymbol{N}}}{K}-\boldsymbol{f_{\star}}|\le \kappa}\exp(-cK^{2}|\frac{\overrightarrow{\boldsymbol{N}}}{K}-\boldsymbol{f_{\star}}|^2+CK|\frac{\overrightarrow{\boldsymbol{N}}}{K}-\boldsymbol{f_{\star}}|).
\end{aligned}
\end{equation}
In the last step of \eqref{tempupper}, we use \eqref{lem1} and \eqref{lem2}. It is clear that the above right hand side is bounded from which the claim follows. 
\end{proof}
Having established Lemma \ref{lemG4},  we complete the proof of Proposition \ref{order0} as in the one-cut case.

To prove Proposition \ref{o2}, let us notice first that we can replace $\Xi$ 
by taking the integral only on filling fractions close to that of the equilibrium measure since the error 
will be otherwise of order $e^{-cN^2}$: for a given configuration $\boldsymbol{\lambda}=(\lambda_1,\ldots, \lambda_{N-M-1})$ denote
${\overrightarrow{\boldsymbol{N(\lambda)}}}= (N_1(\lambda),\ldots, N_g(\lambda))$ with $N_h(\lambda)=\#\{i: \lambda_i\in \boldsymbol{A}_h\}$.
Then hereafter we replace $\Xi$ by its localized version :
\begin{eqnarray}\label{defLY} 
{\Xi(\eta_1, \cdots, \eta_M):=\mu^{\frac{N}{N-M}V;\boldsymbol{A}}_{N-M, \beta}(\prod_{j=1}^{M}e^{\beta\sum_{i=1}^{N-M}\ln|\eta_{j}-\lambda_{i}|-\frac{\beta}{2}(N-M)V(\eta_{j})}\mathbf{1}_{|\frac{{\overrightarrow{\boldsymbol{N(\lambda)}}}}{N-M} -\eee_{\star}|<\kappa})}\,.
\end{eqnarray}
For any $\kappa>0$, the part we cut-off can be controlled by large deviation principle of the empirical measure $L_{N}$, which is of order $e^{-c_{\kappa}N^{2}}$. 
All our estimates on $F_{N,M}\Xi$ below will be made up to this error that we will not write done to simplify the exposition. The proof goes again through an expansion of terms where the filling fractions are fixed.

 \begin{equation}\label{keykey}
F_{N, M}\Xi (\eta_{1}, \cdots, \eta_{M}){=} 
\sum_{
\begin{subarray}
\quad N-M=N_{1}+\cdots+N_{g}, \\|\frac{\overrightarrow{\boldsymbol{N}}}{N-M}-\boldsymbol{f_{\star}}|<\kappa
\end{subarray}
}
c_{\overrightarrow{\boldsymbol{N}}}d_{\overrightarrow{\boldsymbol{N}}}, 
\end{equation}
where
\begin{itemize}
\item
\begin{equation} \label{def c}
c_{\overrightarrow{\boldsymbol{N}}}:=\frac{1}{Z_{N-M, \beta}^{V;\boldsymbol{A}}}
\frac{(N-M)!}{N_{1}!\cdots
N_{g}!}\cdot Z^{V; \boldsymbol{A}}_{N-M, \frac{\overrightarrow{\boldsymbol{N}}}{N-M}, \beta}, 
\end{equation}
\item
\begin{equation} \label{def d} d_{\overrightarrow{\boldsymbol{N}}}:=\mu^{V;\boldsymbol{A}}_{N-M, \frac{\overrightarrow{\boldsymbol{N}}}{N-M}\beta}(\prod_{i=1}^{M}e^{\sum_{j=1}^{N-M}(-\frac{\beta}{2}V(\eta_{i})+\beta \ln|\lambda_{j}-\eta_{i}|-\frac{\beta}{2}V(\lambda_{j}))}).
\end{equation}
\end{itemize}
We use the concentration 
Lemma \ref{central}  
with $$h(x)=\sum_{i=1}^M h_{\eta_i} (x)\,, \quad h_\eta(x)= 
-\frac{\beta}{2}V(x)+\beta \ln|\eta_i-x |\, .$$
Note that $\|h\|_\La^2$ is of order $M^2$ for {$\eta_{1},\cdots,\eta_M$} close to $c_0$ and $\|h\|_\infty$ is of order $M$. 
We first estimate $d_{\overrightarrow{\boldsymbol{N}}}$ and then substitute the estimate into (\ref{keykey}).

\begin{eqnarray}
d_{\overrightarrow{\boldsymbol{N}}}
&=&\mu^{V;\boldsymbol{A}}_{N-M, \frac{\overrightarrow{\boldsymbol{N}}}{N-M};\beta}(e^{\sum_{i=1}^{N-M} h(\lambda_i)})\prod_{j=1}^{M}e^{-\frac{\beta}{2}(N-M)V(\eta_{j})} \nonumber\\
&\leq &Ce^{CM^{2}} e^{(N-M) \mu_{\rm{eq}, \frac{\overrightarrow{\boldsymbol{N}}}{N-M}}(h)}\prod_{j=1}^{M}e^{-\frac{\beta}{2}(N-M)V(\eta_{j})}.\label{d}
\end{eqnarray}

Next,  we want to substitute  $\mu_{\rm eq, \frac{\overrightarrow{\boldsymbol{N}}}{N-M}}$ by $\mu_{\rm eq}$.
By  Appendix A.1 in \cite{BG2},  $\boldsymbol{f}\to \mu_{\rm eq,\boldsymbol{f}} (h_\eta)$ is Lipschitz in a neighborhood of $\boldsymbol{f_{\star}}$, uniformly in $\eta\in [c_0-\epsilon,c_0+\epsilon]$ so that 
\begin{equation}\label{k3}
|\mu_{\rm{eq}}(h)-
\mu_{\rm{eq}, \frac{\overrightarrow{\boldsymbol{N}}}{N-M}}(h)|=|\mu_{\rm{eq}, \boldsymbol{f_{\star}}}(h)-
\mu_{\rm{eq}, \frac{\overrightarrow{\boldsymbol{N}}}{N-M}}(h)|\leq CM|\frac{\overrightarrow{\boldsymbol{N}}}{N-M}-\boldsymbol{f_{\star}}|.
\end{equation}
Combining (\ref{d}), \eqref{keykey},  (\ref{k3}) gives 
\begin{equation}\label{k4}
\begin{aligned}
&F_{N, M}\Xi\left(\eta_{1}, \cdots, \eta_{M} \right)
\lesssim e^{CM^{2}}(\prod_{j=1}^{M}e^{-\frac{\beta}{2}NV(\eta_{j})})\\
&\times \sum_{|\frac{\overrightarrow{\boldsymbol{N}}}{N-M}-\boldsymbol{f_{\star}}|<\kappa}
c_{\overrightarrow{\boldsymbol{N}}}\exp\left((N-M)(\mu_{{\rm eq}}^{V;\boldsymbol{A}}(h)+CM|\frac{\overrightarrow{\boldsymbol{N}}}{N-M}-\boldsymbol{f_{\star}}|)+O(M)\right).
\end{aligned}
\end{equation} 
Finally, observe as in \eqref{tempupper}  that $c_{\overrightarrow{\boldsymbol{N}}}$ has a sub-Gaussian tail, that is:
\begin{equation}\label{k7}
c_{\overrightarrow{\boldsymbol{N}}}\leq Ce^{-c(N-M)^{2}|\frac{\overrightarrow{\boldsymbol{N}}}{N-M}-\boldsymbol{f_{\star}}|^{2}+C(N-M)|\frac{\overrightarrow{\boldsymbol{N}}}{N-M}-\boldsymbol{f_{\star}}|}.
\end{equation}
so that we deduce that
\begin{equation}\label{k6}
\sum_{|\frac{\overrightarrow{\boldsymbol{N}}}{N-M}-\boldsymbol{f_{\star}}|<\kappa}c_{\overrightarrow{\boldsymbol{N}}}e^{CM(N-M)|\frac{\overrightarrow{\boldsymbol{N}}}{N-M}-\boldsymbol{f_{\star}}|}\leq Ce^{CM^{2}}, 
\end{equation}
{Indeed, $$CM(N-M)|\frac{\overrightarrow{\boldsymbol{N}}}{N-M}-\boldsymbol{f_{\star}}|\leq 
{\frac{2}{c} C^2M^{2}+\frac{c}{2}|N-M|^{2}|\frac{\overrightarrow{\boldsymbol{N}}}{N-M}-\boldsymbol{f_{\star}}|^{2}}$$ so that   the term $c_{\overrightarrow{\boldsymbol{N}}}e^{CM(N-M)|\frac{\overrightarrow{\boldsymbol{N}}}{N-M}-\boldsymbol{f_{\star}}|}$ has also a sub-Gaussian tail up to multiplying  it by $e^{CM^{2}}$, thus \eqref{k6} follows from \eqref{k7}. }
Therefore \eqref{k4} yields the desired upper bound, as in \eqref{eqr}:

\begin{equation}\label{k5}
\begin{aligned}
F_{N.M}\Xi({\eta_{1}, \cdots, \eta_{M}})&\lesssim& e^{CM^{2}}(\prod_{j=1}^{M}e^{-\frac{\beta}{2}(N-M)V(\eta_{j})})e^{(N-M)\mu_{{\rm eq}}(h)} \\
&\le& Ce^{CM^2}e^{-\frac{\beta NM}{2}\int V(\eta)d\mu_{{\rm eq}}(\eta)}e^{-N\frac{\beta}{2}\sum_{j=1}^{M}{\mathcal{J}}^{V;\boldsymbol{B}}(\eta_{j})}\,.
\end{aligned}
\end{equation}
The proof of \eqref{lb} is similar to  the one cut case. From \eqref{keykey}, choosing $\overrightarrow{\boldsymbol{N}}$ so that
$|\overrightarrow{\boldsymbol{N}}- N\boldsymbol{f}_\star|\le 1$, we get
$$F_{N,1}\Xi(\eta)\ge c_{\overrightarrow{\boldsymbol{N}}} d_{\overrightarrow{\boldsymbol{N}}}\,.$$
We use Lemma \ref{central} to lower bound the term in $d_{\overrightarrow{\boldsymbol{N}}}$:
$$d_{\overrightarrow{\boldsymbol{N}}} \ge e^{-cM^2} e^{ -\frac{\beta}{2}N( \mathcal J^{V;\boldsymbol{B}}(\eta) +\int V(x)d\mu_{\rm eq}(x))}\,.$$
The $c_{\overrightarrow{\boldsymbol{N}}}$ term is bounded from below by  the partition function estimate from Theorem \ref{part} as well as the upper bound for
$ Z_{N, \beta}^{V; \boldsymbol{A}}$ provided by \eqref{G2} and Lemma \ref{lemG4}.

\section{Acknowledgments}
Part of this work was completed as part of 
the MIT 
SPUR program over the summer of 2013.  AG was partially supported by the Simons Foundation and by NSF Grant DMS-1307704. 
The authors are very grateful to a anonymous referee for his helpful comments.

\appendix

\section{Concentration lemmas}
\subsection{Moments  estimate}
Here we deduce the following lemma by using the estimate for correlator, by a direct application of Cauchy's integral formula. 
\begin{lem}\label{core}
Let Assumption \ref{main-assume} and \ref{as} hold,  let $\overrightarrow{\boldsymbol{N}}=(N_1,\ldots,N_g)$ so that $\sum N_i=N$ and 
let $\mu_{N, \frac{\overrightarrow{\boldsymbol{N}}}{N}, \beta}^{V;\boldsymbol{A}}$be the fixed filling fractions measure, and $\mu_{{\rm eq}, \frac{\overrightarrow{\boldsymbol{N}}}{N},}$ be its limiting measure. Let $h$ be a function that is holomorphic in a open neighborhood $\boldsymbol{U}$ of $\boldsymbol{A}$. Then, 
\begin{equation}
|\mu_{N, \frac{\overrightarrow{\boldsymbol{N}}}{N}, \beta}^{V;\boldsymbol{A}}(\sum h(\lambda_{i}))
-N\mu_{{\rm eq}, \frac{\overrightarrow{\boldsymbol{N}}}{N}}(h)|\lesssim C\|h\|_\infty.
\end{equation}
where $\|h\|_\infty$ is the supremum norm of $h$ on a contour around $\boldsymbol{A}$ inside $\boldsymbol{U}$.\\
This result in particular  hold in the one cut case where the fraction is always set  to $\overrightarrow{\boldsymbol{N}}=N$.
\end{lem}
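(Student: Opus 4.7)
The plan is to reduce the statement to a direct consequence of the correlator expansion \eqref{expco} via Cauchy's integral formula. Since $h$ is holomorphic in $\boldsymbol{U}\supset \boldsymbol{A}$, I would first choose a fixed contour $\Gamma\subset\boldsymbol{U}$ that encloses $\boldsymbol{A}$ (and in particular all the eigenvalues, since $\lambda_i\in\boldsymbol{A}$ almost surely under the fixed filling fractions measure). For each $\lambda_i\in\boldsymbol{A}$, Cauchy's formula gives
\[
h(\lambda_i)=\frac{1}{2\pi i}\oint_\Gamma \frac{h(x)}{x-\lambda_i}\,dx,
\]
so that summing over $i$ and taking expectation under $\mu_{N,\frac{\overrightarrow{\boldsymbol{N}}}{N},\beta}^{V;\boldsymbol{A}}$ yields
\[
\mu_{N,\frac{\overrightarrow{\boldsymbol{N}}}{N},\beta}^{V;\boldsymbol{A}}\!\Bigl(\sum_i h(\lambda_i)\Bigr)=\frac{1}{2\pi i}\oint_\Gamma h(x)\,W_{\frac{\overrightarrow{\boldsymbol{N}}}{N}}(x)\,dx.
\]
Applying the same contour representation to $\mu_{{\rm eq},\frac{\overrightarrow{\boldsymbol{N}}}{N}}$, whose support is contained in $\boldsymbol{A}$ so the same contour works, one gets
\[
N\,\mu_{{\rm eq},\frac{\overrightarrow{\boldsymbol{N}}}{N}}(h)=\frac{N}{2\pi i}\oint_\Gamma h(x)\,W^{\{-1\}}_{\frac{\overrightarrow{\boldsymbol{N}}}{N}}(x)\,dx.
\]

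Subtracting and invoking the correlator expansion \eqref{expco} from Theorem \ref{part}, which asserts that $W_{\frac{\overrightarrow{\boldsymbol{N}}}{N}}(x)-NW^{\{-1\}}_{\frac{\overrightarrow{\boldsymbol{N}}}{N}}(x)=O(1)$ uniformly on compact subsets of $\mathbb C\setminus\boldsymbol{A}$, and in particular uniformly on $\Gamma$, one immediately obtains
\[
\Bigl|\mu_{N,\frac{\overrightarrow{\boldsymbol{N}}}{N},\beta}^{V;\boldsymbol{A}}\!\Bigl(\sum_i h(\lambda_i)\Bigr)-N\,\mu_{{\rm eq},\frac{\overrightarrow{\boldsymbol{N}}}{N}}(h)\Bigr|\le \frac{|\Gamma|}{2\pi}\,\|h\|_\infty\,\sup_{x\in\Gamma}\bigl|W_{\frac{\overrightarrow{\boldsymbol{N}}}{N}}(x)-NW^{\{-1\}}_{\frac{\overrightarrow{\boldsymbol{N}}}{N}}(x)\bigr|\lesssim \|h\|_\infty,
\]
which is the claimed inequality.

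The only nontrivial point is to verify that the $O(1)$ remainder in \eqref{expco} is genuinely uniform along the fixed contour $\Gamma$; this is exactly what Theorem \ref{part} provides, since $\Gamma$ can be taken to lie in a compact set disjoint from $\boldsymbol{A}$. The one-cut version is just the specialization where there are no filling fractions to fix, using \eqref{expco0} in place of \eqref{expco}, and the argument is otherwise identical. No additional concentration input is required; the statement is purely a consequence of the leading-order correlator estimate together with Cauchy's formula.
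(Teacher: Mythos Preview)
Your proposal is correct and follows essentially the same approach as the paper's proof: both apply Cauchy's integral formula along a fixed contour around $\boldsymbol{A}$ to express $\mu_{N,\frac{\overrightarrow{\boldsymbol{N}}}{N},\beta}^{V;\boldsymbol{A}}(\sum h(\lambda_i))$ as a contour integral of $h$ against the correlator $W_{\frac{\overrightarrow{\boldsymbol{N}}}{N}}$, and then invoke the $O(1)$ remainder in \eqref{expco} (resp.\ \eqref{expco0} in the one-cut case). Your write-up is simply a more detailed version of the paper's two-line argument.
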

\begin{proof}
 As $h$ is holomorphic, we can write by Cauchy formula, for a contour $\mathcal C$ around $\boldsymbol{A}$, 
 $$\mu_{N, \boldsymbol{f}, \beta}^{V;\boldsymbol{A}}(\sum h(\lambda_{i}))
 =\frac{1}{2i\pi}\int_{\mathcal C} h(\xi) W_{ \boldsymbol{f}}(\xi) d\xi$$
 from which the estimate follows from \eqref{expco} (and \eqref{expco0} in the one cut case).
 \end{proof}

\subsection{Concentration estimates}
We  assumed in Assumption \ref{as} that  $V$ is strictly convex in a neighborhood of $\boldsymbol{A}$ in order to use the 
following concentration inequality, see sections 2.3.2, 4.4.17, 4.4.26 in \cite{AGZ} for more details. Note here that we fix the component in which each
eigenvalue is living so that indeed they only see a convex potential.
\begin{lem}[Concentration Inequality]
\label{comi} Let $\boldsymbol{f}\in \mathcal E_g$ be given. Let $V$ be a smooth function such that $V''(x)\geq C>0$
for all $x\in\boldsymbol{A}$. Let $h$ be a function that is class
$C^{1}$ on $\mathbb{R}^{N}$. Let $\overrightarrow{\boldsymbol{N}}=(N_1,\ldots,N_g)$ so that $\sum N_i=N$. Then
\begin{equation}
\mu_{N,  \frac{\overrightarrow{\boldsymbol{N}}}{N},
\beta}^{V;\boldsymbol{A}}\left[\exp\{\left(f-\mu_{N,  \frac{\overrightarrow{\boldsymbol{N}}}{N}, \beta}^{V;\boldsymbol{A}}(h)\right)\}\right]\lesssim e^{\frac{1}{NC}\|h\|_{\mathcal{L}}^{2}}, 
\end{equation}
where
\[
\|h\|_{\mathcal{L}}:=\sqrt{\sum_{i=1}^{N}\sup_{x\in\boldsymbol{A}^N}|\partial_{\lambda_{i}}h(x)|^{2}}.
\]
\end{lem}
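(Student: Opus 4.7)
The plan is to obtain this concentration bound by the standard Bakry--\'Emery/Herbst route, exploiting the fact that the fixed filling fraction measure $\mu_{N,\frac{\overrightarrow{\boldsymbol{N}}}{N},\beta}^{V;\boldsymbol{A}}$ is supported on the \emph{convex} set $\prod_{h=1}^g \boldsymbol{A}_h^{N_h}$. Write the density as $e^{-H}/Z$ on this product of intervals, where
\[
H(\boldsymbol{\lambda}) \;=\; \frac{N\beta}{2}\sum_i V(\lambda_i) \;-\; \beta\!\sum_{i<j}\log|\lambda_i-\lambda_j|.
\]
The first step is to compute the Hessian of $H$ at any configuration with distinct coordinates and identify two contributions: the diagonal potential piece, with entries $\tfrac{N\beta}{2}V''(\lambda_i)$, and the logarithmic interaction piece.

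The second step is to observe that the Hessian of $-\log|\lambda_i-\lambda_j|$ restricted to the pair $(\lambda_i,\lambda_j)$ equals $(\lambda_i-\lambda_j)^{-2}\bigl(\begin{smallmatrix}1 & -1 \\ -1 & 1\end{smallmatrix}\bigr)$, which is positive semidefinite; summing over all pairs (including pairs in different components $\boldsymbol{A}_h, \boldsymbol{A}_{h'}$) gives an overall PSD contribution
\[
v^{T}\nabla^{2}\!\Bigl(-\beta\!\sum_{i<j}\log|\lambda_i-\lambda_j|\Bigr) v \;=\; \beta\sum_{i<j}\frac{(v_i-v_j)^{2}}{(\lambda_i-\lambda_j)^{2}} \;\geq\; 0.
\]
Combining this with Assumption \ref{as}, which gives $V''\geq\sigma>0$ on $\boldsymbol{A}$, yields $\nabla^{2}H \succeq \tfrac{N\beta\sigma}{2}\,\mathrm{Id}$ uniformly on the (open) support of the measure.

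The third step is to invoke the Bakry--\'Emery criterion: a strictly log-concave density on a convex set whose $-\log$ has Hessian bounded below by $\lambda\,\mathrm{Id}$ satisfies a logarithmic Sobolev inequality with constant $\rho=1/\lambda$. Here this gives an LSI with constant $\rho = 2/(N\beta\sigma)$. The last step is Herbst's argument: for any $C^{1}$ function $h$ on $\mathbb{R}^{N}$ with Euclidean Lipschitz constant $\|h\|_{\mathcal L}$, applying the LSI to $f = e^{th/2}$ and integrating the resulting differential inequality in $t$ produces
\[
\mu_{N,\frac{\overrightarrow{\boldsymbol{N}}}{N},\beta}^{V;\boldsymbol{A}}\!\Bigl[\exp\bigl(t(h-\mu_{N,\frac{\overrightarrow{\boldsymbol{N}}}{N},\beta}^{V;\boldsymbol{A}}(h))\bigr)\Bigr] \;\leq\; \exp\!\Bigl(\tfrac{\rho t^{2}}{2}\|h\|_{\mathcal L}^{2}\Bigr),
\]
valid for all $t\in\mathbb{R}$. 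Setting $t=1$ and absorbing $\beta,\sigma$ into the constant gives the stated bound $e^{\|h\|_{\mathcal L}^{2}/(NC)}$.

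The only delicate point I foresee is the handling of the boundary: the domain $\prod_h \boldsymbol{A}_h^{N_h}$ is convex but not all of $\mathbb{R}^{N}$, so one either applies Bakry--\'Emery in the form valid on a convex subset (with reflected diffusion) or, equivalently, approximates the hard-wall confinement by a smooth strictly convex cutoff supported on a slightly larger convex neighborhood and passes to the limit; either variant is a standard adaptation (cf.~the treatment in \cite[\S 4.4]{AGZ}) and does not change the Hessian lower bound, so the LSI constant is unaffected.
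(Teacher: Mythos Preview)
Your proposal is correct and follows exactly the approach the paper has in mind: the paper does not give a proof of this lemma but simply refers to \cite[\S 2.3.2, 4.4.17, 4.4.26]{AGZ}, noting that fixing the filling fractions makes the domain a convex box $\prod_h \boldsymbol{A}_h^{N_h}$ so that each eigenvalue sees only the convex potential on its own interval. Your Bakry--\'Emery/Herbst argument is precisely the content of those sections of \cite{AGZ}, and your handling of the boundary by approximation is the standard workaround used there.
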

Note that this lemma applies in particular in the one cut case. 

\begin{lem}\label{central}
Let Assumption \ref{main-assume} and \ref{as} hold and let $\overrightarrow{\boldsymbol{N}}=(N_1,\ldots,N_g)$ so that $\sum N_i=N$. 
Then, there exists a finite constant $C$ so that for any  h holomorphic
in an open   neighborhood of $\boldsymbol{A}$, we have 
\begin{equation}\label{eq1}e^{-C\|h\|_\infty} \lesssim
\mu_{N,\frac{\overrightarrow{\boldsymbol{N}}}{N},
\beta}^{V;\boldsymbol{A}}(\exp(\sum_{i=1}^N ( h(\lambda_{i})-\int h(\eta)d\mu_{{\rm eq}, \frac{\overrightarrow{\boldsymbol{N}}}{N}}(\eta)))) \lesssim e^ {C(\|h\|_{\mathcal{L}}^{2}+\|h\|_\infty)}\,.
\end{equation}
\end{lem}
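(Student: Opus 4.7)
\textbf{Proof plan for Lemma \ref{central}.} The strategy is to decompose the centered sum into a random fluctuation around the measure's expectation plus a deterministic bias between the expectation under $\mu_{N,\frac{\overrightarrow{\boldsymbol{N}}}{N},\beta}^{V;\boldsymbol{A}}$ and the equilibrium expectation, and then to control each piece with one of the two lemmas already established. Throughout, let $H(\lambda_{1},\ldots,\lambda_{N}):=\sum_{i=1}^{N}h(\lambda_{i})$ and write $\nu:=\mu_{N,\frac{\overrightarrow{\boldsymbol{N}}}{N},\beta}^{V;\boldsymbol{A}}$ and $\mu_{\mathrm{eq}}^{\boldsymbol{f}}:=\mu_{{\rm eq},\frac{\overrightarrow{\boldsymbol{N}}}{N}}$. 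The key identity is
\[
\sum_{i=1}^{N}h(\lambda_{i})-N\mu_{\mathrm{eq}}^{\boldsymbol{f}}(h)=\bigl(H-\nu(H)\bigr)+\bigl(\nu(H)-N\mu_{\mathrm{eq}}^{\boldsymbol{f}}(h)\bigr),
\]
where the second bracket is a non-random quantity.

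For the upper bound, I first invoke Lemma \ref{core} (which applies because $h$ is holomorphic in a neighborhood of $\boldsymbol{A}$) to conclude that the deterministic bias satisfies $|\nu(H)-N\mu_{\mathrm{eq}}^{\boldsymbol{f}}(h)|\le C\|h\|_{\infty}$. This absorbs a factor $e^{C\|h\|_{\infty}}$ outside the expectation. It then suffices to bound $\nu\bigl(\exp(H-\nu(H))\bigr)$ by the concentration inequality Lemma \ref{comi}, which applies thanks to the convexity assumption on $V$ built into Assumption \ref{as}. Viewed as a function of $N$ variables, the Lipschitz norm of $H$ satisfies $\|H\|_{\mathcal{L}}^{2}=\sum_{i=1}^{N}\sup|h'(\lambda_{i})|^{2}\le N\|h\|_{\mathcal{L}}^{2}$, so Lemma \ref{comi} gives $\nu(\exp(H-\nu(H)))\le \exp(\|h\|_{\mathcal{L}}^{2}/C)$. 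Multiplying the two contributions yields the desired bound $e^{C(\|h\|_{\mathcal{L}}^{2}+\|h\|_{\infty})}$.

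For the lower bound, I apply Jensen's inequality to the convex function $\exp$:
\[
\nu\Bigl(\exp\bigl(H-N\mu_{\mathrm{eq}}^{\boldsymbol{f}}(h)\bigr)\Bigr)\ge \exp\bigl(\nu(H)-N\mu_{\mathrm{eq}}^{\boldsymbol{f}}(h)\bigr),
\]
and the right-hand side is bounded below by $e^{-C\|h\|_{\infty}}$ using again Lemma \ref{core}. Combining the two directions establishes the lemma.

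The only genuinely substantive ingredients are Lemma \ref{core} (which controls the first-order correlator using the expansion \eqref{expco}) and Lemma \ref{comi}, both of which are already available; the rest is an essentially formal splitting. The main conceptual point to verify is that the $N$-variable Lipschitz norm of $H$ aggregates to $N\|h\|_{\mathcal{L}}^{2}$ so that the factor $1/(NC)$ in Lemma \ref{comi} cancels the $N$ and produces a bound in $\|h\|_{\mathcal{L}}^{2}$ independent of dimension, which is precisely what allows the estimate to be useful when applied with $h$ replaced by $\sum_{j=1}^{M}h_{\eta_{j}}$ in the proof of Proposition \ref{o2}.
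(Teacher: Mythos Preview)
Your proof is correct and follows essentially the same route as the paper: Jensen's inequality for the lower bound, the concentration inequality (Lemma~\ref{comi}) with $\|H\|_{\mathcal L}^2=N\|h\|_{\mathcal L}^2$ for the upper bound, and Lemma~\ref{core} to control the deterministic bias $\nu(H)-N\mu_{\mathrm{eq}}^{\boldsymbol f}(h)$ by $C\|h\|_\infty$. If anything, your write-up makes the role of Lemma~\ref{core} more explicit than the paper's terse closing line.
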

\begin{proof} 
By Jensen's Inequality:
\begin{equation}\label{eq3}
\begin{aligned}
\mu_{N, \frac{\overrightarrow{\boldsymbol{N}}}{N}, \beta}^{V;\boldsymbol{A}}(\exp(\sum h(\lambda_i)))
\geq \exp(\mu_{N, \frac{\overrightarrow{\boldsymbol{N}}}{N}, \beta}^{V;\boldsymbol{A}}(\sum h(\lambda_{i}))).
\end{aligned}
\end{equation}
The upper bound is based on the concentration equality for fixed filling fractions 
  of Lemma \ref{comi} with $\|\sum h(\lambda_i)\|_{\mathcal L}^2=N\|h\|_{\mathcal L}^2$
  so that
 \begin{equation}\label{eq4}
\begin{aligned}
\mu_{N, \frac{\overrightarrow{\boldsymbol{N}}}{N}, \beta}^{V;\boldsymbol{A}}(\exp(\sum h(\lambda_i)))
\leq \exp\{\frac{1}{C}\|h\|^2_{\mathcal L}\} \exp(\mu_{N, \frac{\overrightarrow{\boldsymbol{N}}}{N}, \beta}^{V;\boldsymbol{A}}(\sum h(\lambda_{i}))).
\end{aligned}
\end{equation}
Lemma \ref{comi} completes the proof.
\end{proof}

\bibliographystyle{alpha}
\bibliography{BG}

\begin{thebibliography}{ACKM93}

\bibitem[ACKM93]{ACKMe}
J.~Ambj{\o}rn, L.~Chekhov, C.~F. Kristjansen, and Yu. Makeenko.
\newblock Matrix model calculations beyond the spherical limit.
\newblock {\em Nuclear Phys. B}, 404(1-2):127--172, 1993.

\bibitem[ACM92]{ACM92}
J.~Ambj{\o}rn, L.~Chekhov, and Yu. Makeenko.
\newblock Higher genus correlators from the {H}ermitian one-matrix model.
\newblock {\em Phys. Lett. B}, 282(3-4):341--348, 1992.

\bibitem[AGZ10]{AGZ}
Greg~W Anderson, Alice Guionnet, and Ofer Zeitouni.
\newblock An introduction to random matrices.
\newblock {\em Cambridge University Press}, 2010.

\bibitem[Ake96]{Ake96}
Gernot Akemann.
\newblock Higher genus correlators for the {H}ermitian matrix model with
  multiple cuts.
\newblock {\em Nuclear Phys. B}, 482(1-2):403--430, 1996.

\bibitem[BAG97]{BAG}
G.~Ben~Arous and A.~Guionnet.
\newblock Large deviations for {W}igner's law and {V}oiculescu's
  non-commutative entropy.
\newblock {\em Probab. Theory Related Fields}, 108(4):517--542, 1997.

\bibitem[BBAP05]{BBP}
Jinho Baik, G{\'e}rard Ben~Arous, and Sandrine P{\'e}ch{\'e}.
\newblock Phase transition of the largest eigenvalue for nonnull complex sample
  covariance matrices.
\newblock {\em Ann. Probab.}, 33(5):1643--1697, 2005.

\bibitem[BEY]{BEY2}
P.~Bourgade, L.~Erd\"os, and H.-T. Yau.
\newblock Edge universality of beta ensembles.
\newblock {\em http://arxiv.org/abs/1306.5728v1}.

\bibitem[BEY12]{BEY1}
Paul Bourgade, L{\'a}szl{\'o} Erd{\H{o}}s, and Horng-Tzer Yau.
\newblock Bulk universality of general {$\beta$}-ensembles with non-convex
  potential.
\newblock {\em J. Math. Phys.}, 53(9):095221, 19, 2012.

\bibitem[BG11]{BG1}
Ga{\"e}tan Borot and Alice Guionnet.
\newblock Asymptotic expansion of beta matrix models in the one-cut regime.
\newblock 2011.

\bibitem[BG13]{BG2}
Ga{\"e}tan Borot and Alice Guionnet.
\newblock All-order asymptotic expansion of beta matrix models in the multi-cut
  regime.
\newblock {\em arXiv preprint arXiv:1303.1045}, 2013.

\bibitem[BIPZ78]{bp78}
Edouard Br{\'e}zin, Claude Itzykson, Giorgio Parisi, and Jean-Bernard Zuber.
\newblock Planar diagrams.
\newblock {\em Communications in Mathematical Physics}, 59(1):35--51, 1978.

\bibitem[CE06]{CE06}
Leonid Chekhov and Bertrand Eynard.
\newblock Matrix eigenvalue model: Feynman graph technique for all genera.
\newblock {\em arXiv preprint math-ph/0604014}, 2006.

\bibitem[Che06]{Ch06}
L.~Chekhov.
\newblock Matrix models with hard walls: geometry and solutions.
\newblock {\em J. Phys. A}, 39(28):8857--8893, 2006.

\bibitem[Cla]{tb}
Tom Claeys.
\newblock The birth of a cut in unitary random matrix ensembles.
\newblock {\em arXiv preprint arXiv:0711.2609}.

\bibitem[DE02]{DE02}
Ioana Dumitriu and Alan Edelman.
\newblock Matrix models for beta ensembles.
\newblock {\em Journal of Mathematical Physics}, 43:5830, 2002.

\bibitem[Dei99]{Defcours}
P.~A. Deift.
\newblock {\em Orthogonal polynomials and random matrices: a
  {R}iemann-{H}ilbert approach}, volume~3 of {\em Courant Lecture Notes in
  Mathematics}.
\newblock New York University Courant Institute of Mathematical Sciences, New
  York, 1999.

\bibitem[Eyn06]{eynardbirth}
B.~Eynard.
\newblock Universal distribution of random matrix eigenvalues near the `birth
  of a cut' transition.
\newblock {\em J. Stat. Mech. Theory Exp.}, (7):P07005, 33 pp. (electronic),
  2006.

\bibitem[FB]{BFG}
A.~Guionnet F.~Bekerman, A.~Figalli.
\newblock Transport maps for $\beta$-matrix models and universality.
\newblock {\em 1311.2315}.

\bibitem[Joh98]{Johansson98}
Kurt Johansson.
\newblock On fluctuations of eigenvalues of random {H}ermitian matrices.
\newblock {\em Duke Math. J.}, 91(1):151--204, 1998.

\bibitem[Meh04]{meh}
Madan~Lal Mehta.
\newblock {\em Random matrices}, volume 142.
\newblock Access Online via Elsevier, 2004.

\bibitem[Pas06]{Pasturs}
L.~Pastur.
\newblock Limiting laws of linear eigenvalue statistics for {H}ermitian matrix
  models.
\newblock {\em J. Math. Phys.}, 47(10):103303, 22, 2006.

\bibitem[RRV06]{RRV06}
Jose Ramirez, Brian Rider, and B{\'a}lint Vir{\'a}g.
\newblock Beta ensembles, stochastic airy spectrum, and a diffusion.
\newblock {\em arXiv preprint math/0607331}, 2006.

\bibitem[Shc13]{Shch}
M.~Shcherbina.
\newblock Fluctuations of linear eigenvalue statistics of {$\beta$} matrix
  models in the multi-cut regime.
\newblock {\em J. Stat. Phys.}, 151(6):1004--1034, 2013.

\bibitem[ST97]{SaffTotik}
Edward~B. Saff and Vilmos Totik.
\newblock {\em Logarithmic potentials with external fields}, volume 316 of {\em
  Grundlehren der Mathematischen Wissenschaften [Fundamental Principles of
  Mathematical Sciences]}.
\newblock Springer-Verlag, Berlin, 1997.
\newblock Appendix B by Thomas Bloom.

\end{thebibliography}
\end{document}